\documentclass[11pt]{amsart}
\usepackage{amsmath,amssymb,amsthm,amscd,verbatim}
\usepackage{graphicx}
\usepackage{epsfig}
\usepackage{stmaryrd}
\bibliographystyle{plain}


\setlength{\textwidth}{6.5in}     
\setlength{\oddsidemargin}{0in}   
\setlength{\evensidemargin}{0in}  
\setlength{\textheight}{8.5in}    
\setlength{\topmargin}{0in}       
\setlength{\headheight}{0in}      
\setlength{\headsep}{.3in}         
\setlength{\footskip}{.5in}       
\vfuzz2pt


\numberwithin{equation}{section}

\newtheorem{thm}[equation]{Theorem}

\newtheorem{lem}[equation]{Lemma}

\newtheorem{question}[equation]{Question}

\newtheorem*{thm*}{Theorem}


\theoremstyle{remark}

 \newcommand{\ba}{\mathbf{a}}
 
  \newcommand{\bk}{\mathbf{k}}

    \newcommand{\Inj}{{\operatorname{Inj}}}


\newcommand{\Aut}{\operatorname{Aut}}

\def\Ex {{\mathbb E}}

\begin{document}

\title {The inducibility of blow-up graphs}

\author{Hamed Hatami}
\address{School of Computer Science, McGill University, Montreal, Canada.}
\email{hatami@cs.mcgill.ca}

\author{James Hirst}
\address{School of Computer Science, McGill University, Montreal, Canada.}
\email{james.hirst@mail.mcgill.ca}

\author{Serguei Norine}
\address{Department of mathematics, McGill University, Montreal, Canada.}
\email{snorin@math.mcgill.ca}

\thanks{Hamed Hatami was supported in part by NSERC and FQRNT. James Hirst was supported in part by NSERC}

\begin{abstract}
The blow-up of a graph is obtained by replacing every vertex with a finite collection of copies so that the copies of two vertices are adjacent if and only if the originals are. If every vertex is replaced with the same number of copies, then the resulting graph is called a balanced blow-up.

We show that any graph which contains the maximum number of induced copies of a sufficiently large balanced blow-up of $H$  is itself essentially a blow-up of $H$. This gives an asymptotic answer to a question in~\cite{MR1326923}.
\end{abstract}

\maketitle

\noindent {{\sc AMS Subject Classification:} \quad 05C35}
\newline
{{\sc Keywords:} induced subgraphs; inducibility; blow-up

\section{Introduction \label{sec:intro}}

What is the maximum number of induced copies of a given graph $H$ in any graph on $n$ vertices? This basic question has been studied previously in~\cite{MR1326923,MR866942,MR867727,MR0401552,MR1292981,James}, and its answer is known for a handful of small graphs and certain complete multipartite graphs.
Obtaining a general answer seems to be difficult. Even the case of the path on four vertices is not resolved. In this paper we provide an asymptotic answer to this question
for the class of graphs described below.

The blow-ups of $H$ provide natural candidates for graphs that contain many induced copies of $H$.
For a positive integer vector $\bk \in \mathbb{Z}_+^{V(G)}$, the $\bk$-\emph{blow-up} of $H$, denoted by $H^{(\bk)}$, is the graph obtained by replacing every
vertex $v$ of $H$ with $\bk(v)$ different vertices where a copy of $u$ is adjacent to a copy of $v$ in the blow-up graph if and only if $u$ is adjacent to $v$ in $H$. When all $\bk(v)$ are equal to  some positive integer $k$, the corresponding blow-up is called {\em balanced} and denoted simply by $H^{(k)}$.

For two graphs $H$ and $G$, the \emph{induced density} of $H$ in $G$, denoted by  $i(H,G)$ is the number of induced copies of $H$ in $G$ divided by ${|V(G)| \choose |V(H)|}$. Let $i(H,n)$ denote the maximum induced density of $H$ in any graph on $n$ vertices. Modern extremal graph theory more often focuses on understanding the asymptotic behavior of such functions rather than determining their values on every given $n$. The asymptotic behavior of $i(H,n)$ is captured by the \emph{inducibility} of $H$  defined as  $i(H):=\lim_{n \to \infty} i(H,n)$. It is straightforward to see that this limit always exists.

The blow-ups of the graph $H$ provide natural candidates for graphs with largest induced density of $H$. Note that graphs with maximum edge density are always complete, but the blow-ups of a one-edge graph are complete bipartite graphs. As this simple example already illustrates, it is not in general true that the maximal graphs are always the blow-ups of $H$.

In contrast with the above example, our main result, Theorem~\ref{thm:main}, says that for every graph $H$, the inducibility of $H^{(h)}$ is essentially achieved by blow-ups of $H$. More precisely, for sufficiently large $h$, there exists vectors $\bk_n \in \mathbb{Z}_+^{V(H)}$ with $|V(H^{(\bk_n)})|=n$ such that $$i(H^{(h)}) = \lim_{n \to \infty} i(H,H^{(\bk_n)}).$$

Note that $K_r^{(h)}$, the $h$-blow-up of the complete graph on $r$ vertices, is the  complete $r$-partite graph where each part has exactly $h$ vertices.
Bollob\'as, Egawa, Harris and Jin~\cite{MR1326923} showed that for sufficiently large $h$, the maximal graphs for $K_r^{(h)}$ are blow-ups of $K_r$. They asked for which graphs $H$ does there exist a constant $h$ such that for sufficiently large $n$, the graph on $n$ vertices that contains the maximum number of induced copies of $H^{(h)}$ is a blow-up of $H$? Note that Theorem~\ref{thm:main} gives an asymptotic answer to this question.

\section{Preliminaries \label{sec:prel}}
All graphs in this paper are finite and loopless. We denote vectors with bold font, e.g. $\ba = (\ba(1), \ba(2), \ba(3))$ is a vector
with three coordinates. For every positive integer $k$, let $[k]$ denote the set
$\{1, \ldots, k\}$. The \emph{adjacency matrix} of  a graph $G$, denoted by $A_G$, is a zero-one matrix whose rows and columns are indexed by
vertices of $G$ and $A_G(u,v)=1$ if and only if $u$ is adjacent to $v$. For a graph $G$ and a subset $S \subseteq V(G)$,
the subgraph of $G$ induced by $S$ is denoted by $G[S]$.

A \emph{homomorphism} from a graph $H$ to a graph $G$ is a mapping $\phi:V(H) \rightarrow V(G)$ such that if
$uv$ is an edge of $H$, then  $\phi(u)\phi(v)$ is an edge of $G$. A \emph{strong homomorphism} from a graph $H$ to a graph $G$ is a map  $\phi:V(H) \to V(G)$
such that $uv \in E(H)$ if and only if $\phi(u)\phi(v) \in E(G)$.  An injective strong homomorphism is called an \emph{embedding}.

A bijective strong homomorphism from $H$ to $G$ is  an \emph{isomorphism} between $H$ and $G$.
An isomorphism from $G$ to itself is called an \emph{automorphism}. Note that the set of automorphisms of a graph $G$ with the composition operator constitutes a group $\Aut(G)$ which is called the \emph{automorphism group} of $G$.

Two vertices of a graph are called \emph{twins} if they have the same set of neighbors. A graph $G$ is called \emph{twin-free} if no two distinct vertices of $G$ are twins. Every strong homomorphism from a twin-free graph into any other graph is necessarily an embedding.  Given a graph $G$, define the equivalence relation $\sim$ on $V(G)$ by letting $u \sim v$ if and only if $u$ and $v$ are twins. Clearly, all classes of this relation are independent sets in $G$ and we can form the factor-graph $\tilde{G} := G/ \sim$. Note that $G$ is twin-free and $G = \tilde{G}^{(\bk)}$, where $\bk$ is the vector of cardinalities of equivalence classes. This factorization of every finite graph as a blow-up of a twin-free graph is unique. Note that $\tilde{G}$ is the unique twin-free graph such that there exists a surjective strong homomorphism from $G$ to $\tilde{G}$.

For two graphs $H$ and $G$, define the \emph{strong homomorphism density} of $H$ in $G$, denoted by $s(H;G)$, to be the probability that a random map from $V(H)$ to $V(G)$ is a strong homomorphism. Given a graph $H$ and a sufficiently large $h$, we are interested in determining the supremum of  $s(H^{(h)};G)$ over all finite graphs $G$. It turns out that this supremum is not necessarily attained by any graph $G$. The common approaches to remedy this is either to work with a generalization of graphs called \emph{graphons} as in~\cite{MR2274085}, or to directly work with homomorphisms from the so called \emph{flag algebras} as in~\cite{MR2371204}. However for our purposes, it suffices to work with the simpler objects, namely weighted graphs.

\subsection{Weighted graphs}\label{sec:weighted}

For a finite set $S$, let $\mathcal{M}(S)$ denote the set of all probability measures  on $S$ with the property that every element of $S$ has non-zero measure.
For $\mu \in \mathcal{M}(S)$, let $\alpha(\mu):=\min_{v \in S} \mu(v)$. A \emph{weighted graph} $G^{\mu}$ is a graph $G$ together with a probability measure $\mu \in \mathcal{M}(V(G))$.

We define $s(H;G^{\mu})$ as the probability that a random map $\psi: V(H) \to V(G)$ is a strong homomorphism, where $\psi$ maps the vertices of $H$ into $V(G)$ independently according to the distribution $\mu$. Note that if $\mu$ is the uniform distribution, then this new definition coincides with our earlier definition of a strong homomorphism density.

We say that two weighted graphs $G_1^{\mu_1}$ and $G_2^{\mu_2}$ are equivalent, denoted  by $G_1^{\mu_1} \sim G_2^{\mu_2}$, if there exists an isomorphism $\theta:\tilde{G_1} \to \tilde{G_2}$ such that for every vertex $v \in \tilde{G_1}$, we have
$$\mu_1\left(\{u: \mbox{$u$ is a twin of $v$ in $G_1$}\}\right) =\mu_2\left(\{u: \mbox{$u$ is a twin of $\theta(v)$ in $G_2$}\}\right).$$
Note that $\sim$ is an equivalence relation, and furthermore if $G_1^{\mu_1} \sim G_2^{\mu_2}$, then for every graph $H$ we have $s(H;G_1^{\mu_1})=s(H;G_2^{\mu_2})$. It follows from standard results about graphons~\cite[Theorem 9]{ElekSzegedy} that if for every graph $H$ we have $s(H;G_1^{\mu_1})=s(H;G_2^{\mu_2})$, then $G_1^{\mu_1} \sim G_2^{\mu_2}$.

Two weighted graphs $F^{\mu_1}$ and $G^{\mu_2}$ are called \emph{commensurable} if  $V(F)=V(G)$ and $\mu_1=\mu_2$. The $d_1$ distance
of two weighted graphs  $G_1^{\mu_1}$ and $G_2^{\mu_2}$ is defined as
$$d_1(G_1^{\mu_1},G_2^{\mu_2}) := \inf \sum_{u,v \in V(F_1)}\mu(u)\mu(v) |A_{F_1}(u,v)-A_{F_2}(u,v)|,$$
where the infimum is over commensurable weighted graphs $F_1^{\mu} \sim G_1^{\mu_1}$ and $F_2^{\mu} \sim G_2^{\mu_2}$. It is easy to see that the infimum is always attained and thus can be replaced with a minimum. A simple union bound shows that for every $H$,
\begin{equation}
\label{eq:continuity}
\left| s(H;G_1^{\mu_1}) - s(H;G_1^{\mu_1}) \right| \le |V(H)|^2 d_1(G_1^{\mu_1},G_2^{\mu_2}).
\end{equation}

Let $G^{\mu}$ and $F^{\mu}$ be commensurable weighted graphs. We say that $v_0 \in V(G)$ is \emph{$\epsilon$-regular} with respect to $(G,F,\mu)$, if for some $v \in V(F)$, $$\mu\left(\{ u \in V(G) : A_G(v_0,u) \neq A_F(v,u)\}\right) \leq \epsilon.$$
Roughly speaking, this means that if $v_0$ is not $\epsilon$-regular, then the neighborhood of $v_0$ differs greatly from the neighborhood of every  vertex in $F$.

\subsection{Partially labeled graphs}
Let us recall some notations related to quantum graphs defined by Freedman, Lov\'asz, and Schrijver~\cite{MR2257396}. Analogous but slightly different definitions are given by Razborov in the context of flag algebras~\cite{MR2371204}.  For a nonnegative integer $k$, a $k$-\emph{partially labeled graph} is a graph in which $k$ of the vertices are labeled by distinct natural numbers $1,\ldots,k$ (there may be any number of unlabeled vertices).  Let ${\mathcal F}_k$ denote the set of all $k$-partially labeled graphs. Note that  ${\mathcal F}_0$ is the set of all finite graphs with no labels.

Let $G^\mu$ be a weighted graph.
We  extend the definition of strong homomorphism density to partially labeled graphs. Let $H$ be a $k$-partially labeled graph, and $\phi:[k] \rightarrow V(G)$ be a map. Identifying the labeled vertices in $H$ with their labels, we can think of $\phi$ as a map from the labeled vertices of $H$ into $V(G)$.  Hence we can define $s(H,\phi;G^\mu)$ as the probability that a random extension of $\phi$ to a map from $V(H)$ to $V(G)$ is a strong homomorphism, where the unlabeled vertices are mapped randomly and independently according to the measure $\mu$. Note that
\begin{equation}
\label{eq:averaging}
s(H;G^\mu) = \Ex_\phi \left[ s(H,\phi;G^\mu) \right],
\end{equation}
where in the expectation, $\phi$ is a random map from $[k]$ to $V(G)$ chosen according to the measure $\mu$.

A \emph{$k$-quantum graph} is a formal linear combination of $k$-partially labeled graphs. More formally, if $H_1,\ldots,H_m$ are $k$-partially labeled graphs, and $a_1,\ldots,a_m$ are real numbers,  then the formal sum $a_1 H_1 + \ldots + a_m H_m$ is called a $k$-quantum graph.

The function $s(\cdot,\phi; G^\mu)$ extends linearly to the set of all $k$-quantum graphs. More precisely for a $k$-quantum graph $f:=a_1H_1+\ldots+a_mH_m$, we have $$s(f,\phi; G^\mu):=a_1 s(H_1,\phi; G^\mu)+\ldots+a_m s(H_m,\phi; G^\mu).$$

For a graph $H$, define the $1$-quantum graph $\partial H$ as
$$\partial H := \sum_{u \in V(H)} H_{u},$$
where $H_{u}$ is the $1$-partially labeled graph obtained from $H$ by labeling
the vertex $u$ with label $1$.

The choice of the notation $\partial H$ is explained by the following observation. For fixed $H$ and $G$,  $s(H;G^\mu)$  is a polynomial in the variables $\{\mu(v)\}_{v \in V(G)}$,  and for very $v_0 \in V(G)$, we have
\begin{equation}
\label{eq:partial}
\frac{\partial}{\partial \mu(v_0)}s(H;G^\mu)=s(\partial H, 1 \mapsto v_0; G^\mu).
\end{equation}

\section{Main Results}

It is not difficult to see that
\begin{equation}
\label{eq:inducibilityHom}
i(F)=\frac{1}{|\Aut(F)|} \sup s(F;G^\nu),
\end{equation}
where the supremum is taken over all weighted graphs $G^\nu$. Note that this supremum can equivalently be taken over unweighted graphs.

Our first theorem shows that if $F$ is a sufficiently large balanced blow-up of a fixed graph $H$, then the supremum in the right-hand side of (\ref{eq:inducibilityHom}) is attained when $G=H$ for an appropriate choice of $\nu$. Our second theorem classifies the graphs $H$ for which this $\nu$ can be chosen to be uniform.

\begin{thm}
\label{thm:main}
Let $H$ be a graph. There exists a positive integer $h_0$ such that for every $h \ge h_0$,  there exists a distribution $\nu \in \mathcal{M}(V(H))$ with
$$s(H^{(h)};H^{\nu}) = \sup_{G} s(H^{(h)};G),$$
where the supremum is over all finite graphs $G$.
\end{thm}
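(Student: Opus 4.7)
The plan is to show that the supremum $c^* := \sup_G s(H^{(h)}; G)$ is attained by some optimal weighted graph $(G, \mu)$, and then to prove that any such optimizer is equivalent (in the sense of Section~\ref{sec:weighted}) to $H^\nu$ for some $\nu \in \M(V(H))$. For existence, I would use compactness: the continuity bound \eqref{eq:continuity} combined with $d_1$-compactness of weighted graphs on a bounded number of vertices allows extraction of a limit from any near-optimizing sequence. A uniform bound on the essential number of vertices of near-optima should follow from the observation that any vertex with negligible $\mu$-weight can be pruned (or merged into a twin class) without substantially affecting $s(H^{(h)}; G^\mu)$.

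Given such an optimizer $(G, \mu)$, I would apply the derivative formula \eqref{eq:partial} together with the Lagrange multiplier condition on the probability simplex $\M(V(G))$. Optimality forces $s(\partial H^{(h)}, 1 \mapsto v_0; G^\mu) = |V(H^{(h)})| \cdot c^*$ for every $v_0 \in V(G)$, the common value being determined by Euler's identity applied to the polynomial $s(H^{(h)}; G^\mu)$, which is homogeneous of degree $|V(H^{(h)})|$ in $\mu$. The crux is a rigidity step: I would use this equality, together with the observation that any strong homomorphism $H^{(h)} \to G$ induces a partition of $V(G)$ into classes corresponding to vertices of $H$, to show that every $v_0 \in V(G)$ is $\epsilon$-regular with respect to some commensurable realization $(G, F, \mu)$ with $F^\mu \sim H^\nu$. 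The intuition is that for large $h$, the $h$ copies of each $v \in V(H)$ independently constrain the neighborhood of any vertex of $G$ that receives them, so any mismatch with a canonical $H$-neighborhood is multiplicatively penalized. Perturbing $\mu$ towards vertices with the exact canonical neighborhood then yields a strict improvement unless $\epsilon = 0$, at which point the assignment $v_0 \mapsto v$ is a strong homomorphism $G \to H$; combined with twin-uniqueness and the fact that all of $V(H)$ must appear in the image (since otherwise $s(H^{(h)}; G^\mu) = 0$), we obtain $(G, \mu) \sim (H, \nu)$.

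The main obstacle is this rigidity step: quantifying how large $h$ must be to ensure that the first-order condition above genuinely forces each $v_0$'s neighborhood in $(G, \mu)$ to match that of a canonical vertex of $H$, rather than only approximately so. This is precisely where the ``sufficiently large $h$'' hypothesis enters. I anticipate a stability-type argument exploiting the product structure coming from the $h$ independent copies of each vertex of $H$, so that neighborhood mismatches accumulate multiplicatively and become untenable once $h$ exceeds a threshold depending on $H$. A separate careful argument will likely be needed to handle vertices of small but positive $\mu$-weight, where the Lagrange condition is delicate and a naive perturbation may not be interior-feasible.
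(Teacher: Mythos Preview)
Your architecture matches the paper's: derive the Lagrange condition $s(\partial H^{(h)},1\mapsto v_0;G^\mu)=\text{const}$ from optimality, use it to force every vertex of $G$ to be $\epsilon$-regular with respect to some commensurable $F^\mu\sim H^\nu$, and then conclude. The paper also precedes the regularity step with a global-closeness lemma (Lemma~\ref{lem:close}) showing $d_1(G^\mu,H)\le\delta$ whenever $s(H^{(h)};G^\mu)\ge\frac12 s(H^{(h)};H)$; you gesture at this with the ``partition of $V(G)$'' remark, but it is an essential preliminary, since Lemma~\ref{lem:vertices_regular} (the quantitative $\epsilon$-regularity bound) only bites once $G^\mu$ is already $d_1$-close to $H^\nu$ with $\alpha(\nu)$ bounded below. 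Note also that the paper avoids your compactness/existence step entirely: it argues by contradiction, assuming only that some $G^\mu$ beats $\max_\nu s(H^{(h)};H^\nu)$ and that $\mu$ is optimal among measures on the \emph{same} twin-free graph $\tilde G$ (a compact simplex), which suffices for the Lagrange identity.

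The genuine gap is your endgame. You propose to ``perturb $\mu$ towards vertices with the exact canonical neighborhood'' to force $\epsilon=0$. This cannot work as stated: the Lagrange condition you just established says precisely that every such perturbation of $\mu$ is neutral to first order, so no strict improvement is available by reweighting; moreover $G$ need not contain any vertex with the \emph{exact} canonical neighborhood to shift weight toward, and reweighting cannot alter the edge set. The paper does not attempt to drive $\epsilon$ to zero. Instead it proves a direct comparison (Lemma~\ref{lem:exact}): if \emph{every} vertex of $G$ satisfies $\mu(\{u:A_G(v,u)\neq A_F(v,u)\})\le\epsilon$, then $s(H^{(h)};G^\mu)\le s(H^{(h)};H^\nu)$. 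The proof splits strong homomorphisms $H^{(h)}\to G$ into those that are also strong homomorphisms into $F$ (bounded by $(1-\delta)s(H^{(h)};H^\nu)$ where $\delta=s(K_2;J^\mu)$ for $J=G\triangle F$) and those that are not (which, via Lemma~\ref{lem:technical2}, force a large star in $J$ and are then bounded by Lemma~\ref{lem:technical1}). This is the step your plan is missing; the ``multiplicative penalty'' intuition you describe is exactly what Lemmas~\ref{lem:technical1} and~\ref{lem:technical2} make precise, but it is used to compare $s(H^{(h)};G^\mu)$ to $s(H^{(h)};H^\nu)$ directly, not to rule out $\epsilon>0$ at an optimizer.
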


Let $H$ be a graph and $\bk$ be such that $H = \tilde{H}^{(\bk)}$. We call $H$ \emph{balanced} if $\bk$ is invariant under every automorphism of $\tilde{H}$.

\begin{thm}
\label{thm:balanced}
A graph $H$ is balanced if and only if there exists a positive integer $h_0$ such that for every $h \ge h_0$,
$$s(H^{(h)};H)= \sup_G s(H^{(h)};G),$$
where the supremum is over all finite graphs $G$. Furthermore if $s(H^{(h)};H)=s(H^{(h)};G^\nu)$, then $H \sim G^\nu$.
\end{thm}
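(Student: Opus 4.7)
My plan is to use Theorem~\ref{thm:main} to reduce the problem to a finite-dimensional optimization of the polynomial $F(\mu) := s(H^{(h)};\tilde H^\mu)$ on the simplex $\mathcal{M}(V(\tilde H))$, and then analyze $F$ directly via the weighted AM--GM inequality and Perron--Frobenius theory. The first step is to derive the explicit formula
$$F(\mu) = \sum_{\sigma\in\Aut(\tilde H)}\prod_{v\in V(\tilde H)} \mu(v)^{h\bk(\sigma v)}, \qquad (\star)$$
which holds because $\tilde H$ is twin-free: any strong homomorphism $H^{(h)}\to\tilde H$ must send each twin class of $H^{(h)}$ (of size $h\bk(v)$) to a single vertex of $\tilde H$, and the induced map $V(\tilde H)\to V(\tilde H)$ is necessarily an automorphism. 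Theorem~\ref{thm:main} identifies $\sup_G s(H^{(h)};G)$ with $\max_\mu F(\mu)$ for $h\ge h_0$, and I expect its proof to give the stronger statement (needed for the uniqueness clause) that every $G^\nu$ achieving the supremum is equivalent to some $\tilde H^\mu$ attaining the maximum of $F$.

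For the \emph{if} direction together with uniqueness, assume $H$ is balanced, i.e.\ $\bk\circ\sigma=\bk$ for every $\sigma\in\Aut(\tilde H)$. Then every term in $(\star)$ coincides and $F(\mu)=|\Aut(\tilde H)|\prod_v\mu(v)^{h\bk(v)}$, which by the strict weighted AM--GM inequality is maximized uniquely on the simplex at $\mu_0(v):=\bk(v)/n$, where $n:=|V(H)|$. Since $\tilde H^{\mu_0}\sim H$ with uniform vertex measure, this gives $s(H^{(h)};H)=F(\mu_0)=\sup_G s(H^{(h)};G)$; the strict AM--GM uniqueness combined with the strengthened reduction above then forces any extremal $G^\nu$ to satisfy $G^\nu\sim H$.

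For the \emph{only if} direction, suppose $\mu_0$ maximizes $F$. Since every $\bk(v)\ge 1$, $\mu_0$ lies in the relative interior of the simplex, so it is a Lagrange critical point: $\partial F/\partial\mu(v)|_{\mu_0}$ is constant in $v$. A short computation using (\ref{eq:partial}) and $(\star)$ rewrites this constancy as the eigenvector equation $A\bk=K\bk$, where $A$ is the non-negative matrix
$$A(v,w):= \sum_{\substack{\tau\in\Aut(\tilde H)\\ \tau v=w}} C_\tau, \qquad C_\tau:=\prod_w(\bk(w)/n)^{h\bk(\tau w)}.$$
This $A$ is block-diagonal according to the $\Aut(\tilde H)$-orbits on $V(\tilde H)$; each block is irreducible (since $\Aut(\tilde H)$ acts transitively within an orbit) and has constant row-sum $\sum_\tau C_\tau$, so by Perron--Frobenius the only strictly positive eigenvectors of $A$ are positive combinations of the orbit indicators $\mathbf{1}_O$. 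As $\bk$ is strictly positive, it must be constant on each $\Aut(\tilde H)$-orbit, which is precisely the balanced condition. Notably, this argument works at every fixed $h$ and does not require $h$ to be large. The main subtlety in the overall proof is thus the appeal in the first paragraph to the strengthened form of Theorem~\ref{thm:main}; once $(\star)$ and that reduction are in place, the remainder is a transparent optimization.
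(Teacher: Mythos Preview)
Your reduction to the formula $(\star)$ and the ``if'' direction via weighted AM--GM are exactly what the paper does (the paper's Lemma~\ref{lem:balanced}), including deferring the uniqueness over all $G^\nu$ to the proof machinery behind Theorem~\ref{thm:main}; the paper is in fact no more explicit than you are on that point.

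Where you genuinely diverge is the ``only if'' direction. The paper uses the hypothesis for \emph{all} $h\ge h_0$: grouping $\Aut(\tilde H)$ into level sets $A_r=\{\sigma:p_{\sigma(\bk)}(\mu_\bk)=r\}$, the criticality condition at $\mu_\bk$ reads $\sum_r hr^h\|\bk\|_1\,a_{r,v}=\text{const}$ with $a_{r,v}=\bk(v)^{-1}\sum_{\sigma\in A_r}\bk(\sigma v)$; letting $h$ vary separates the $r$-levels, and then a max/min argument on $\bk$ within a single $A_r$ gives the contradiction. Your Perron--Frobenius argument is different and cleaner: the criticality condition at a \emph{single} $h$ already says $A\bk=\lambda\bk$ for the nonnegative matrix $A(v,w)=\sum_{\tau v=w}C_\tau$, whose orbit blocks are strictly positive with common row sum $\sum_\tau C_\tau$, so the unique positive eigenvector of each block is constant. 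This buys you a strictly stronger conclusion (one value of $h$ suffices for the optimization step) and avoids the asymptotic separation of the $r^h$ terms. The paper's route, on the other hand, is slightly more elementary in that it needs no spectral input.

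Your computation $A\bk=\lambda\bk$ checks out: $\partial F/\partial\mu(v)\big|_{\mu_0}=\tfrac{h}{\mu_0(v)}\sum_\sigma C_\sigma\,\bk(\sigma v)$, and constancy in $v$ is exactly $(A\bk)(v)=\lambda\bk(v)$ with $\lambda$ absorbing $h$ and $\|\bk\|_1$. The only point worth stating more carefully is that a positive eigenvector of an irreducible nonnegative matrix necessarily belongs to the Perron eigenvalue; once said, your conclusion that $\bk$ is constant on each orbit is immediate.
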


\section{Technical lemmas~\label{sec:lemmas}}
In this section we state and prove a few easy and technical observations which are used in the proofs of Theorems~\ref{thm:main}~and~\ref{thm:balanced}.
For a nonzero vector $\bk \in \mathbb{R}_+^k$, let the distribution $\mu_{\bk}$ be defined by $\mu_{\bk}(i):=\frac{\bk(i)}{\|\bk\|_1}$.  Given a finite set $S$, a distribution $\mu \in \mathcal{M}(S)$, and a vector $\ba \in \mathbb{R}_+^S$, we define
\begin{equation}
\label{eq:s_ba}
p_{\ba}(\mu):=\prod_{v \in S}\mu(v)^{\ba(v)}.
\end{equation}
The arithmetic-geometric inequality implies that for every $\ba \in \{x \in \mathbb{R}:x \ge 1\}^S$, we have
\begin{equation}
\label{eq:maximizing_product}
\sup_{\mu \in \mathcal{M}(S)}  p_{\ba}(\mu) = p_{\ba}(\mu_\ba).
\end{equation}

Let $J$ be a graph with edge homomorphism density at most $\delta$ and maximum degree at most $\epsilon |V(J)|$.
The next lemma obtains an easy upper-bound on the probability that $r$ randomly and independently chosen vertices of $J$  contain a copy of $K_{\ell,\ell}$ (or $K_{1,s}$) as a subgraph.
\begin{lem}\label{lem:technical1}
Let $\delta,\epsilon>0$ and $J^\mu$ be a weighted graph with
$s(K_2;J^\mu) \leq \delta$ and $\mu(\{u:uv \in E(J)\}) \leq \epsilon $
for every $v \in V(J)$.  Let a multiset $Z$ consisting of $r$ vertices
of $J$ be chosen according to $\mu$ and independently at random. Then
\begin{equation}\label{eq:bound1}
\Pr[ \mbox{$J[Z]$  contains a subgraph isomorphic to $
K_{\ell,\ell}$}] \leq 3^{r}\delta^{\ell}
\end{equation}
for every integer $\ell \geq 1$, and
\begin{equation}\label{eq:bound2}
\Pr [ \mbox{$J[Z]$ contains a vertex of degree} \geq s] \leq 3^r
\delta \epsilon^{s-1}
\end{equation}
for every integer $s \geq 1$.
\end{lem}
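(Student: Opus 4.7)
The plan is to deduce both inequalities from a single union-bound template: enumerate all labeled placements of the forbidden substructure ($K_{\ell,\ell}$ or $K_{1,s}$) among the $r$ positions of $Z$, and bound the probability that each such placement is realized.

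For~(\ref{eq:bound1}) I range over ordered pairs of disjoint $\ell$-subsets $L,R\subseteq[r]$. Their number is the trinomial coefficient $\binom{r}{\ell,\ell,r-2\ell}$, which is at most $3^r$ since it is a single term in the expansion of $(1+1+1)^r$. Writing $N(v):=\{u:uv\in E(J)\}$ and integrating out the $L$-variables using their independence,
$$\Ex\Big[\prod_{a\in L,\,b\in R}A_J(z_a,z_b)\Big]\;=\;\Ex_{y_1,\dots,y_\ell}\Big[\mu\Big(\bigcap_{b=1}^\ell N(y_b)\Big)^{\ell}\Big].$$
The key step is the inequality $\mu(\bigcap_b N(y_b))^{\ell}\le\prod_b\mu(N(y_b))$, which is just ``$\min\le\text{geometric mean}$'' raised to the $\ell$th power. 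Using independence of the $y_b$'s, the right-hand side equals $(\Ex_y\mu(N(y)))^{\ell}=s(K_2;J^\mu)^{\ell}\le\delta^{\ell}$. Any copy of $K_{\ell,\ell}$ in $J[Z]$ uses $2\ell$ distinct vertices, hence $2\ell$ distinct positions of $Z$, so the event that $J[Z]$ contains $K_{\ell,\ell}$ is contained in the union of events indexed by these pairs $(L,R)$; the union bound closes~(\ref{eq:bound1}).

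For~(\ref{eq:bound2}) I enumerate a center position $i\in[r]$ together with a leaf-set $S\subseteq[r]\setminus\{i\}$ of size $s$; the number of such choices is $r\binom{r-1}{s}\le r\cdot 2^{r-1}\le 3^r$. Conditioning on $z_i$ and using independence of the leaf positions,
$$\Pr\bigl[z_iz_j\in E(J)\ \text{for all}\ j\in S\bigr]\;=\;\Ex_{z_i}\bigl[\mu(N(z_i))^{s}\bigr]\;\le\;\epsilon^{s-1}\,\Ex_{z_i}\bigl[\mu(N(z_i))\bigr]\;=\;\epsilon^{s-1}\delta,$$
where the max-degree hypothesis $\mu(N(v))\le\epsilon$ is used to peel off $s-1$ factors from the $s$-fold product. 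A union bound over $(i,S)$ gives~(\ref{eq:bound2}).

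I do not anticipate a genuine obstacle: the only inequality beyond union-bounding and Fubini is the one-line bound $\mu(\bigcap_b N(y_b))^{\ell}\le\prod_b\mu(N(y_b))$. The one point needing a sentence of justification is that events of the form ``positions in $L$ and $R$ have all bipartite edges'' upper-bound the event ``$J[Z]\supseteq K_{\ell,\ell}$''; this is automatic because $J$ is loopless, so coincidences within $Z$ only reduce the number of edges and hence cannot create a $K_{\ell,\ell}$ that is not already witnessed by some disjoint pair $(L,R)$.
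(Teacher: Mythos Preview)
Your proof is correct and follows the same union-bound template as the paper. The only noteworthy difference is in how you bound each term in~(\ref{eq:bound1}): the paper observes that for fixed $A=\{a_1,\ldots,a_\ell\}$ and $B=\{b_1,\ldots,b_\ell\}$ it suffices to require the $\ell$ matching edges $a_ib_i$, which are independent events each of probability $s(K_2;J^\mu)\le\delta$, giving $\delta^\ell$ in one line; you instead integrate over all $\ell^2$ bipartite edges and invoke the inequality $\mu(\bigcap_b N(y_b))^\ell\le\prod_b\mu(N(y_b))$. Both routes reach the same bound, but the matching trick is shorter and sidesteps the extra inequality. Your treatment of~(\ref{eq:bound2}) is essentially identical to the paper's.
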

\begin{proof}
For the proof of (\ref{eq:bound1}) note that there are at most $3^r$
ways to select among elements of $Z$ the parts $(A,B)$ of
$K_{\ell,\ell}$: Each element of $Z$ can belong to $A$, to $B$, or to $Z - A
-B$. For every choice of $A=\{a_1,a_2, \ldots, a_\ell\}$ and $B=\{b_1,b_2,
\ldots, b_\ell\}$, we have $\Pr[a_ib_i \in E(J)] \leq \delta$ and the
corresponding events are independent for $1 \leq i \leq \ell$. The bound
 (\ref{eq:bound1})  follows.

The proof of (\ref{eq:bound2}) is similar. There at most $3^r$ ways of
specifying a vertex $a  \in Z$ and a set $b_1,b_2, \ldots, b_s$ of its
neighbors. We have $\Pr[ab_1 \in E(J)] \leq  \delta$, and $\Pr[ab_i
\in E(J) \: | \: a,b_1,\ldots, b_{i-1}] \leq \epsilon$ for $2 \leq i
\leq s$, implying (\ref{eq:bound2}).
\end{proof}
The next lemma plays a key role in the proofs of Theorems~\ref{thm:main}~and~\ref{thm:balanced}.
\begin{lem}\label{lem:technical2}
Let $\tilde H$ be a twin-free graph with $|V(\tilde H)|=k$, and let $\bk \in \mathbb{N}^{V(\tilde H)}$.  Let $G =\tilde{H}^{(n \bk)}$ and let $\psi: V(G) \to V(\tilde H)$ be a map. Then for every $\gamma >0$ either
\begin{itemize}
\item[{\bf (a)}] there exists a set $X \subseteq V(G)$ with $|X| \leq \gamma |V(G)|$ so that $\psi |_{V(G) - X}$ is a strong homomorphism, or
\item[{\bf (b)}] there exist sets $Y_1,Y_2 \subseteq V(H')$ with $|Y_1|,|Y_2| \geq \frac{\gamma \alpha(\mu_\bk)}{k}|V(G)|$ so that  for all $y_1 \in Y_1$, $y_2 \in Y_2$ we have $A_G(y_1,y_2) \neq A_{\tilde H}(\psi(y_1),\psi(y_2))$.
\end{itemize}
\end{lem}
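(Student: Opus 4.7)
My plan is to exploit the fact that $G=\tilde{H}^{(n\bk)}$ is a blow-up in an essential way. Let $\pi:V(G)\to V(\tilde H)$ denote the natural projection, so that $V(G)$ is partitioned into parts $W_w:=\pi^{-1}(w)$ of size $n\bk(w)$ and adjacency in $G$ depends only on $\pi$. I refine this partition by the values of $\psi$: for each pair $(w,v)\in V(\tilde H)^2$, set
$$V_{w,v}:=\{y\in V(G)\,:\,\pi(y)=w,\ \psi(y)=v\},$$
giving at most $k^2$ nonempty blocks. The key observation, immediate from the blow-up structure, is that whether a pair $(y_1,y_2)$ of distinct vertices is \emph{bad} in the sense $A_G(y_1,y_2)\neq A_{\tilde H}(\psi(y_1),\psi(y_2))$ depends only on the blocks $V_{w_1,v_1}\ni y_1$ and $V_{w_2,v_2}\ni y_2$: badness holds iff either $w_1\neq w_2$ and $A_{\tilde H}(w_1,w_2)\neq A_{\tilde H}(v_1,v_2)$, or $w_1=w_2$ with $v_1\neq v_2$ and $A_{\tilde H}(v_1,v_2)=1$.

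Call a block \emph{small} if $|V_{w,v}|<\frac{\gamma\alpha(\mu_\bk)}{k}|V(G)|$, and call a pair of distinct blocks bad if some (equivalently, every) pair of vertices drawn from these two blocks is bad. I will dichotomize on whether there exists a bad pair of blocks both of which are non-small. If there is, the two corresponding blocks immediately serve as $Y_1,Y_2$ in conclusion (b), since their sizes exceed the required threshold and every cross pair is bad.

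Otherwise, every bad block pair has at least one small side. Letting $X$ be the union of all small blocks, every bad pair of vertices therefore has at least one endpoint in $X$, and $\psi|_{V(G)\setminus X}$ is then free of bad pairs, i.e., a strong homomorphism. The size bound for $X$ reads
$$|X|\;<\;k^2\cdot\frac{\gamma\alpha(\mu_\bk)}{k}|V(G)|\;=\;k\gamma\alpha(\mu_\bk)|V(G)|\;\leq\;\gamma|V(G)|,$$
where the final step uses $\alpha(\mu_\bk)\leq 1/k$, valid since $\mu_\bk$ is a probability measure on a set of size $k$. Nothing here is genuinely hard; the one subtlety is calibrating the smallness threshold to $\frac{\gamma\alpha(\mu_\bk)}{k}$ so that the $k^2$ small blocks together occupy at most $\gamma|V(G)|$, which is exactly what conclusion (a) demands.
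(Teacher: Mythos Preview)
Your argument is correct and takes a genuinely different, cleaner route than the paper. The paper first builds a ``majority'' map $\sigma:V(\tilde H)\to V(\tilde H)$ by choosing, for each part $\pi^{-1}(v)$, the most popular $\psi$-value; it then branches on whether $\sigma$ is a strong homomorphism, and invokes the twin-free hypothesis twice---once to upgrade $\sigma$ to an automorphism, and once to find a vertex $v_2$ separating $v_0$ from $v_1$ when some part has a sizeable wrongly-labeled piece. You bypass all of this: you refine the blow-up partition into at most $k^2$ blocks $V_{w,v}=\pi^{-1}(w)\cap\psi^{-1}(v)$, observe that badness of a vertex pair is determined by its pair of blocks (and never occurs within a single block), and then dichotomize directly on whether some bad pair of blocks has both sides at least $\frac{\gamma\alpha(\mu_\bk)}{k}|V(G)|$. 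The threshold is calibrated so that the at most $k^2$ small blocks together have size less than $k^2\cdot\frac{\gamma\alpha(\mu_\bk)}{k}|V(G)|=k\gamma\alpha(\mu_\bk)|V(G)|\le\gamma|V(G)|$, using $\alpha(\mu_\bk)\le 1/k$.

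Two remarks on what each approach buys. First, your proof never uses that $\tilde H$ is twin-free, so the lemma in fact holds without that hypothesis; the paper's argument genuinely needs it. Second, the deleted set $X$ you produce (a union of small blocks) is not the same object as the paper's $X$ (the disagreement set between $\psi$ and $\sigma\circ\pi$); but the downstream applications in Lemmas~\ref{lem:vertices_regular} and~\ref{lem:exact} only use the existence of some $X$ with the stated properties, so this difference is immaterial.
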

\begin{proof}
We can assume that $\gamma \le 1$ as otherwise {\bf (a)} is trivial.
Since  $G =\tilde{H}^{(n \bk)}$, there exists a  strong homomorphism $\pi: V(G) \to V(\tilde H)$. For each $v \in V(\tilde H)$, one can find $\sigma(v) \in V(\tilde{H})$ so that
\begin{equation}
\label{eq:lem_technical2}
|\pi^{-1}(v) \cap  \psi^{-1}(\sigma(v))| \geq \frac{|\pi^{-1}(v)|}{k} = \frac{n \bk(v)}{k} \ge \frac{\alpha(\mu_\bk) |V(G)|}{k}.
\end{equation}
Now, if $\sigma : V(\tilde H) \to V(\tilde H)$ is not a strong homomorphism,  there exist $v_1,v_2 \in V(\tilde H)$ so that $A_{\tilde H}(v_1,v_2)\neq A_{\tilde H}(\sigma(v_1),\sigma(v_2))$. Then {\bf (b)} is satisfied by $Y_i:=|\pi^{-1}(v_i) \cap \psi^{-1}(\sigma(v_i))|$ for $i=1,2$ as $|Y_i| \geq \frac{\alpha(\mu_\bk)}{k}|V(G)|$.

Therefore we assume that $\sigma$ is a strong homomorphism and consequently an automorphism of $\tilde H$ since $\tilde H$ is twin-free. Without loss of generality, we assume $\sigma$ is the identity. If $|\pi^{-1}(v) \cap \psi^{-1}(v)| \geq (1 - \gamma) |\pi^{-1}(v)|$ for every $v \in V(\tilde{H})$, then $\pi$ differs from $\psi$ on at most $\gamma|V(G)|$ vertices and {\bf (a)} holds. Therefore, we assume that  $ |\pi^{-1}(v_0) \setminus \psi^{-1}(v_0)| \geq  \gamma |\pi^{-1}(v_0)|$ for some $v_0 \in V(\tilde H)$. Then there exists $v_1 \neq v_0$ satisfying $$|\pi^{-1}(v_0) \cap \psi^{-1}(v_1)|  \geq \frac{\gamma}{k} |\pi^{-1}(v_0)| \ge \frac{\gamma\alpha(\mu_\bk)}{k}|V(G)|.$$ Set $Y_1 := \pi^{-1}(v_0) \cap \psi^{-1}(v_1)$. Since $\tilde{H}$ is twin-free,  $v_1$  is not a twin of $v_0$, and thus there exists $v_2 \in V(\tilde H)$ so that  $A_{\tilde H}(v_0,v_2) \neq A_{\tilde H}(v_1,v_2)$. Set $Y_2 := \pi^{-1}(v_2) \cap \psi^{-1}(\sigma(v_2))=\pi^{-1}(v_2) \cap \psi^{-1}(v_2)$. Thus {\bf (b)} holds as (\ref{eq:lem_technical2}) shows that $|Y_2| \geq \frac{\alpha(\mu_\bk) |V(G)|}{k}$,  and furthermore for $y_1 \in Y_1$ and $y_2 \in Y_2$, we have $$A_G(y_1,y_2) = A_{\tilde H}(v_0,v_2) \neq A_{\tilde H}(v_1,v_2) = A_{\tilde H}(\psi(y_1),\psi(y_2)).$$
\end{proof}

\section{Proof of Theorem~\ref{thm:main}}

We want to show that for sufficiently large $h$, the density $s(H^{(h)};G^\mu)$ cannot be larger than $s(H^{(h)};H^\nu)$ for every $\nu$. The first step  is to show that if it is, then the global structure of $G^\mu$ is close to $H$.

\begin{lem}
\label{lem:close}
For every graph $H$ and every $\delta>0$, there exists a positive integer $h_0$  such that the following holds. If $s(H^{(h)};G^\mu) \ge \frac{1}{2} s(H^{(h)};H)$ for some $h \ge h_0$, then $d_1(G^\mu,H) \le \delta$.
\end{lem}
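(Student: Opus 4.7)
The plan is to use the hypothesized high density of $H^{(h)}$ in $G^\mu$ to locate a small \emph{skeleton} inside $G^\mu$ that mimics the twin-free structure of $H$, and then extend it to a global partition of $V(G)$ witnessing $d_1$-closeness to $H$.

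Write $H = \tilde H^{(\bk)}$ with $\bk \in \mathbb{N}^{V(\tilde H)}$ and set $k := |V(\tilde H)|$. A direct count of strong homomorphisms $H^{(h)} \to H$ (each factoring through an automorphism of $\tilde H$) shows that $c := s(H^{(h)};H)$ is a positive constant depending only on $H$ and $h$. Consider the $k$-partially labeled graph obtained from $H^{(h)} = \tilde H^{(h\bk)}$ by labeling one representative from each twin-class. By the averaging identity~(\ref{eq:averaging}),
\[\Ex_\phi\big[s(H^{(h)},\phi;G^\mu)\big] \;=\; s(H^{(h)};G^\mu) \;\ge\; c/2,\]
so at least a $c/4$-fraction of labelings $\phi:[k]\to V(G)$ (sampled according to $\mu$) satisfy $s(H^{(h)},\phi;G^\mu) \ge c/4$. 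Fix one such \emph{good} $\phi$.

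For each $i \in V(\tilde H)$ define the \emph{profile set}
\[T_i(\phi) := \{\, y \in V(G) : A_G(y,\phi(\ell)) = A_{\tilde H}(i,\ell) \text{ for every } \ell \in V(\tilde H)\, \}.\]
Since any strong-homomorphism extension of $\phi$ must send each of the $h\bk(i)-1$ unlabeled copies of an $i$-block vertex into $T_i(\phi)$,
\[s(H^{(h)},\phi;G^\mu) \;\le\; \prod_{i\in V(\tilde H)} \mu(T_i(\phi))^{h\bk(i)-1}.\]
Twin-freeness of $\tilde H$ forces the $T_i(\phi)$'s to be pairwise disjoint (distinct $i,j$ differ on some coordinate $\ell$), so $\sum_i \mu(T_i(\phi)) \le 1$. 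The maximum of the right-hand side subject to this constraint is controlled by weighted AM-GM~(\ref{eq:maximizing_product}) and is achieved at $\mu(T_i) \propto h\bk(i)-1$; a direct comparison shows that this maximum is within a factor depending only on $H$ of $c$, so strict concavity of the logarithm forces every $\mu(T_i(\phi))$ within $O(h^{-1/2})$ of $\bk(i)/\|\bk\|_1$. In particular the $T_i(\phi)$'s nearly partition $V(G)$ with the fractions prescribed by $H$.

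It remains to show that, for most pairs $(y_1,y_2) \in T_i(\phi)\times T_j(\phi)$, the $G$-adjacency matches $A_{\tilde H}(i,j)$. Set $B_{ij} := \{(y_1,y_2)\in T_i\times T_j : A_G(y_1,y_2) \neq A_{\tilde H}(i,j)\}$. A strong-homomorphism extension of $\phi$ must place every pair of unlabeled vertices from positions $(i,j)$ outside $B_{ij}$; since there are $\Theta(h^2)$ such pairs, significant $\mu^{\otimes 2}$-mass on some $B_{ij}$ would shrink $s(H^{(h)},\phi;G^\mu)$ exponentially in $h^2$. Matching this refined upper bound against $s(H^{(h)},\phi;G^\mu)\ge c/4$ forces $\sum_{i,j}\mu^{\otimes 2}(B_{ij}) \le \delta$ once $h$ is sufficiently large. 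Defining $\psi_\phi:V(G)\to V(\tilde H)$ by $\psi_\phi(y) = i$ for $y\in T_i(\phi)$ (arbitrarily on the negligible remainder) produces a weighted graph commensurable with $G^\mu$ and equivalent to $H$ whose symmetric difference with $G^\mu$ has $\mu^{\otimes 2}$-mass at most $\delta$, yielding $d_1(G^\mu,H)\le\delta$.

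The main obstacle is this last step: the refined counting of strong-homomorphism extensions that quantitatively penalizes \emph{pairs} of unlabeled vertices landing in some $B_{ij}$. Na\"{\i}vely bounding by profile-set membership captures only label-to-unlabeled constraints and misses the crucial inter-unlabeled adjacency constraints; obtaining the correct $\Theta(h^2)$ exponent in the pair-compatibility penalty is delicate, and is where a counting argument in the spirit of Lemma~\ref{lem:technical1} applied to the ``bad-adjacency'' graph on $V(G)$ will be essential.
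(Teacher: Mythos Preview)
Your outline coincides with the paper's proof almost step for step: fix a good labeling $\phi$ of a copy of $\tilde H$ via~(\ref{eq:averaging}), define the profile sets $T_i(\phi)$ (the paper calls them $A_v$), use the product bound $\prod_i \mu(T_i)^{h\bk(i)-1}$ together with~(\ref{eq:maximizing_product}) to force $\mu(T_i)$ close to $\mu_\bk(i)$, and then bound the mass of the ``bad'' pairs $B_{ij}$. So the strategy is right.

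The gap is in your last paragraph, and it is simpler to close than you think. You assert that one needs a $\Theta(h^2)$ exponent in the pair-penalty and flag this as ``the main obstacle''; in fact a $\Theta(h)$ exponent suffices and is obtained by an elementary disjoint-matching trick. Concretely: by pigeonhole some fixed pair $(v_1,v_2)\in V(\tilde H)^2$ has $\mu^{\otimes 2}(B_{v_1 v_2})\ge \gamma/k^2$. Inside $[H^{(h)},\tilde H]$ one can find at least $\lfloor h/2\rfloor-1$ \emph{vertex-disjoint} pairs of unlabeled vertices, one endpoint a twin of $v_1$ and the other a twin of $v_2$. For a random extension of $\phi$ these pairs are mapped independently, so the probability that all of them avoid $B_{v_1 v_2}$ (a prerequisite for a strong homomorphism into $G$) is at most $(1-\gamma/k^2)^{\lfloor h/2\rfloor-1}$. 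Combining with your product bound and~(\ref{eq:maximizing_product}) gives
\[
\tfrac{1}{2}\,s(H^{(h)};H)\ \le\ s(H^{(h)},\phi;G^\mu)\ \le\ \bigl(1-\tfrac{\gamma}{k^2}\bigr)^{\lfloor h/2\rfloor-1}\,C_H\,s(H^{(h)};H),
\]
where $C_H=\prod_v\bigl(\tfrac{h\bk(v)-1}{h\|\bk\|_1-k}\bigr)^{-1}$ is bounded uniformly in $h$. This forces $\gamma\le\delta'$ once $h$ is large enough. No appeal to anything like Lemma~\ref{lem:technical1} is needed here; the $\Theta(h^2)$ pairs you envisage are not independent, and trying to exploit all of them is both harder and unnecessary.
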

\begin{proof}
Let $\delta'>0$ be arbitrary. Recall that the factor-graph  $\tilde{H}$ is twin-free, and there exists a positive integer vector $\bk$ such that $H=\tilde{H}^{(\bk)}$.
Note that
$$s(H^{(h)};H) \ge \prod_{v \in V(\tilde{H})} \mu_\bk(v)^{h \bk(v)}.$$
Denote the number of vertices of $\tilde{H}$ by $k$ and let $[H^{(h)},\tilde{H}]$ be the $k$-partially labeled graph based on $H^{(h)}$
where the $k$ labeled vertices induce a copy of $\tilde{H}$. Suppose that $G^\mu$ satisfies $s(H^{(h)};G^\mu) \ge \frac{1}{2}s(H^{(h)};H)$.

By (\ref{eq:averaging}) there exists $\phi:[k] \rightarrow V(G)$ such that
\begin{equation}
\label{eq:assumption}
s([H^{(h)},\tilde{H}], \phi; G^\mu) \ge \frac{1}{2}s(H^{(h)};H)>0.
\end{equation}

Then $G$ must induce a copy of $\tilde{H}$ on $\phi([k])$ as otherwise $s([H^{(h)},\tilde{H}], \phi; G^\mu)=0$.
For every vertex $v \in V(\tilde{H})$, let $A_v \subseteq V(G)$ denote the set of vertices $u$ in $G$ such that $u$ is a twin of $v$ in the subgraph of $G$ induced by $\phi([k]) \cup \{u\}$. Since $\tilde{H}$ is twin-free, the sets $A_v$ are disjoint. Let $a_v :=\mu(A_v)$ for every $v \in V(\tilde{H})$ and $\beta:=1-\sum_{v \in V(\tilde{H})} a_v$. Since every extension of $\phi$ to a strong homomorphism from $H^{(h)}$ to $G$ maps all twins of $v$ in $H^{(h)}$ to vertices in $A_v$, it follows from (\ref{eq:maximizing_product}) that
\begin{eqnarray*}
s([H^{(h)},\tilde{H}], \phi;G^\mu) &\le& \prod_{v \in V(\tilde{H})} a_v^{h \bk(v) - 1} \le (1-\beta)^{h \|\bk\|_1 - |V(\tilde{H})|} \prod_{v \in V(\tilde{H})} \left(\frac{h \bk(v) - 1}{h \|\bk\|_1 - |V(\tilde{H})|}\right)^{h \bk(v) - 1}
\\ &\le& (1-\beta)^{h \|\bk\|_1 - |V(\tilde{H})|} \prod_{v \in V(\tilde{H})} \left(\frac{h \bk(v) - 1}{h \|\bk\|_1 - |V(\tilde{H})|}\right)^{-1}
\prod_{v \in V(\tilde{H})} \mu_\bk^{h \bk(v)}
\\&=& (1-\beta)^{h \|\bk\|_1 - |V(\tilde{H})|} \prod_{v \in V(\tilde{H})} \left(\frac{h \bk(v) - 1}{h \|\bk\|_1 - |V(\tilde{H})|}\right)^{-1} s(H^{(h)};H).
\end{eqnarray*}

Combining this with (\ref{eq:assumption}) we conclude that $\beta \le \delta'$ provided that $h$ is sufficiently large.

So far, we have established that $\beta$, the measure of the vertices that do not belong to any $A_v$ is small provided that $h$ is sufficiently large. Next we show that the measure of present/absent edges between $A_v$'s that are not consistent with a blow-up of $\tilde{H}$ is small.
Let
$$\gamma:=\sum_{v_1,v_2 \in V(\tilde{H})} \sum_{\substack{u_1 \in A_{v_1}, u_2 \in A_{v_2} \\ A_G(u_1,u_2) \neq A_{\tilde{H}}(v_1,v_2)}} \mu(u_1)\mu(u_2).$$
Call an extension of $\phi$ to a map from $V(H^{(h)})$ to $V(G)$ a \emph{good extension} if for every $v \in V(\tilde{H})$, it maps every twin of $v$ to a vertex in $A_v$.  Note that there exist $v_1,v_2 \in V(\tilde{H})$ such that $$\sum_{\substack{u_1 \in A_{v_1}, u_2 \in A_{v_2} \\ A_G(u_1,u_2) \neq A_{\tilde{H}}(v_1,v_2)}} \mu(u_1)\mu(u_2) \ge \frac{\gamma}{k^2}.$$ There are at least $\frac{h}{2}-1$ \emph{disjoint} edges $e_1,\ldots,e_{\frac{h}{2}-1}$ on the unlabeled vertices of $[H^{(h)},\tilde{H}]$ such that one endpoint of every $e_i$ is  a twin of $v_1$ and the other endpoint is a twin of $v_2$. Hence the probability that a random (chosen according to the distribution $\mu$) good extension $\psi:V(H^{(h)}) \rightarrow V(G)$ of $\phi$ is a strong homomorphism is at most
$$\left(1 - \frac{\gamma}{k^2}\right)^{\frac{h}{2}-1}.$$
It follows that
\begin{eqnarray*}
\frac{1}{2}s(H^{(h)};H) &\le&  \left(1 - \frac{\gamma}{ k^2}\right)^{\frac{h}{2}-1} \prod_{v \in V(\tilde{H})} a_v^{h \bk(v) - 1} \\
&\le& \left(1 - \frac{\gamma}{k^2}\right)^{\frac{h}{2}-1}
\prod_{v \in V(\tilde{H})} \left(\frac{h \bk(v) - 1}{h \|\bk\|_1 - |V(\tilde{H})|}\right)^{-1} s(H^{(h)};H),
\end{eqnarray*}
which shows that $\gamma \le \delta'$ and $ \max_{v \in V(\tilde{H})} |a_v-\mu_\bk(v)| \le \delta'$ provided that $h$ is sufficiently large. Taking $\delta'$ to be sufficiently small and bounding $d_1(G^\mu,\tilde{H}^{\mu_\bk}) =d_1(G^\mu,H)$ in terms of $\beta, \gamma$ and $\max_{v \in V(\tilde{H})}|a_v-\mu_\bk(v)|$, we conclude that $d_1(G^\mu,H) \le \delta$ for large enough $h$.
\end{proof}

Lemma~\ref{lem:close} says that if $s(H^{(h)};G^\mu)$ is sufficiently large, then the global structure of $G^\mu$ is similar to $H$. Next we want to use this approximate global structure to deduce some local information about $G^\mu$. The next lemma shows that if the adjacency of a vertex $v_0$ in $G$ does not match this global structure, then $v_0$ is in few induced copies of $H^{(h)}$.

\begin{lem}
\label{lem:vertices_regular}
For every graph $H$, and every $\epsilon>0$, there exist $h_0,\delta>0$ such that the following holds. Suppose  $G^\mu$ and $F^\mu \sim H^\nu$ are commensurable, $d_1(G^\mu,F^\mu) \le \delta$, and $\alpha(\nu)  \ge \epsilon$.  If $v_0 \in V(G)$ is not $\epsilon$-regular with respect to $(G,F,\mu)$, then for every $h \geq h_0$,
$$s(\partial H^{(h)}, 1 \mapsto v_0; G^\mu) \le \epsilon s(\partial H^{(h)}, 1 \mapsto v_0; F^{\mu}).$$
\end{lem}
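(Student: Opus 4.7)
We compare $s(\partial H^{(h)}, 1 \mapsto v_0; G^\mu)$ with a lower bound on $s(\partial H^{(h)}, 1 \mapsto v_0; F^\mu)$. Since $F^\mu \sim H^\nu$, the vertex $v_0$ belongs to a twin class of $F$ corresponding, via $\tilde F \cong \tilde H$, to some $c^* \in V(\tilde H)$. Write $H = \tilde{H}^{(\bk)}$, let $\nu_c$ denote the $\nu$-measure of the twin class of $c$ in $H$ (equivalently, the $\mu$-measure of the corresponding class of $F$), let $\pi:V(H^{(h)})\to V(H)$ be the natural projection and $\tilde\pi$ its composition with the quotient $V(H)\to V(\tilde H)$. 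Restricting to the $h\bk(c^*)$ vertices $u$ with $\tilde\pi(u)=c^*$ and to the canonical extensions sending each $w$ into the $F$-twin class of $\tilde\pi(w)$ yields strong homomorphisms in $F$, giving the baseline
$$s(\partial H^{(h)}, 1 \mapsto v_0; F^\mu) \ge h\bk(c^*)\cdot \nu_{c^*}^{-1}\prod_{c\in V(\tilde H)}\nu_c^{h\bk(c)}.$$

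For the upper bound on $s(\partial H^{(h)}, 1 \mapsto v_0; G^\mu)$, sum over $u$ and for each $u$ decompose the probability by the class profile $\phi:V(H^{(h)})\to V(\tilde H)$, $\phi(w):=$ $F$-twin class of $\psi(w)$; the constraint $\psi(u)=v_0$ forces $\phi(u)=c^*$. Apply Lemma~\ref{lem:technical2} to $\phi$ (viewing $V(H^{(h)})=V(\tilde{H}^{(h\bk)})$) with a small parameter $\gamma$: either (a) $\phi$ agrees with $\sigma\circ\tilde\pi$ on all but a $\gamma$-fraction of $V(H^{(h)})$ for some $\sigma\in\Aut(\tilde H)$ with $\sigma(\tilde\pi(u))=c^*$, or (b) there exist sets $Y_1,Y_2\subseteq V(H^{(h)})$ of size $\Omega(h)$ with $A_{H^{(h)}}(y_1,y_2)\ne A_{\tilde H}(\phi(y_1),\phi(y_2))$ for every $(y_1,y_2)\in Y_1\times Y_2$. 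In case (b) a strong homomorphism $\psi$ would force $A_G(\psi(y_1),\psi(y_2))\ne A_F(\psi(y_1),\psi(y_2))$ throughout $Y_1\times Y_2$; combining $d_1(G^\mu,F^\mu)\le\delta$ with an argument in the spirit of Lemma~\ref{lem:technical1}~(\ref{eq:bound1}) applied to the ``disagreement graph'' between $G$ and $F$ bounds the contribution of such $\phi$ by $\delta^{\Omega(h)}$ times a constant depending only on $H$, which is negligible for $\delta$ small.

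In case (a), take $\sigma$ to be the identity; other automorphisms are handled by the same argument up to the bounded factor $|\Aut(\tilde H)|$. The pairwise strong-hom constraints on $\psi$ are essentially automatic by $d_1$-closeness, and the dominant cost is the product of the $(u,w)$-adjacency constraints at $v_0$. Writing $\alpha_c^G$ for the fraction of the $F$-twin class of $c$ lying in $N_G(v_0)$, and $\beta_c:=\alpha_c^G$ if $A_{\tilde H}(c^*,c)=1$ and $\beta_c:=1-\alpha_c^G$ otherwise, this cost equals $\beta_{c^*}^{-1}\prod_c \beta_c^{h\bk(c)}$. The non-$\epsilon$-regularity of $v_0$ tested against any vertex of the $F$-class of $c^*$ unpacks to $\sum_c \nu_c(1-\beta_c)>\epsilon$, hence by Jensen's inequality $\prod_c\beta_c^{\nu_c}\le \sum_c\nu_c\beta_c<1-\epsilon$; since $\nu_c\le\bk(c)$ (each $\nu(v)\le 1$) and $\log\beta_c\le 0$, it follows that $\prod_c\beta_c^{h\bk(c)}\le (1-\epsilon)^h$. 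The prefactor $\beta_{c^*}^{-1}$ might blow up when $\beta_{c^*}$ is tiny, but in that regime the trivial bound $\beta_{c^*}^{-1}\prod_c\beta_c^{h\bk(c)}\le \beta_{c^*}^{h\bk(c^*)-1}$ is itself small; taking the geometric mean of the two bounds produces the ratio $(1-\epsilon)^{h/2}$. Summing cases (a) and (b) and dividing by the $F$-baseline, the overall ratio is at most $(1-\epsilon)^{h/2}+\delta^{\Omega(h)}$, which is below $\epsilon$ once $h\ge h_0$ and $\delta\le\delta_0$ are chosen appropriately.

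The main obstacle is the structured case (a) in the regime where $\beta_{c^*}$ is close to $0$: the Jensen-type bound $(1-\epsilon)^h/\beta_{c^*}$ alone is not enough, and one must combine it with the complementary ``own-class'' bound $\beta_{c^*}^{h\bk(c^*)-1}$ via the geometric-mean trick to obtain the clean decay $(1-\epsilon)^{h/2}$.
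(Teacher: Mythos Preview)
Your proposal shares the paper's skeleton: apply Lemma~\ref{lem:technical2} to the class profile $\pi_F\circ\psi$, dispose of the unstructured case~(b) via Lemma~\ref{lem:technical1} applied to the disagreement graph $J$ with $E(J)=E(G)\triangle E(F)$, and in the structured case~(a) extract exponential decay from the failure of $\epsilon$-regularity. The case~(b) arguments are the same.

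In case~(a) the two arguments diverge. The paper fixes $X$ and bounds the probability that $\eta$ is a strong homomorphism to $G$ by $(1-\epsilon)^{\alpha(\mu_{\bk})n-\gamma n}$ times $s(H^{(h)}_{u_0}-X,\phi;F^\mu)$, then absorbs the choice of $X$ into a factor $\binom{n}{\lfloor\gamma n\rfloor}(1/\alpha(\nu))^{\gamma n}$, and relates $s(H^{(h)}_{u_0}-X,\phi;F^\mu)$ back to $s(H^{(h)}_{u_0},\phi;F^\mu)$. You instead compute the ratio explicitly as $\beta_{c^*}^{-1}\prod_c\beta_c^{h\bk(c)}$, pull $\prod_c\beta_c^{\nu_c}<1-\epsilon$ out of non-$\epsilon$-regularity via Jensen, use $\nu_c\le 1\le\bk(c)$ to pass to integer exponents, and handle the rogue $\beta_{c^*}^{-1}$ by a geometric-mean interpolation with the trivial bound $\beta_{c^*}^{h\bk(c^*)-1}$. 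Your route is more explicit about where the decay comes from.

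Three points need tightening before the argument is complete. First, ``the pairwise strong-hom constraints on $\psi$ are essentially automatic by $d_1$-closeness'' is not the right justification: what you are actually doing is \emph{dropping} the $(w_1,w_2)$-constraints with $w_1,w_2\ne u_0$ to get an upper bound, while the class-profile factors $\prod_c\nu_c^{h\bk(c)}$ are kept and cancel against the same factors in the $F$-baseline. Second, the exceptional set $X$ from Lemma~\ref{lem:technical2} must be tracked: the union bound over choices of $X$ and the loss of the class constraint on $X$ produce a correction of order $\binom{n}{\lfloor\gamma n\rfloor}\alpha(\nu)^{-\gamma n}$, exactly as in the paper; you need to check this is beaten by $(1-\epsilon)^{h/2}$ for $\gamma$ small. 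Third, your handling of $\sigma\ne\mathrm{id}$ is not quite right as stated. Your $F$-baseline keeps only the $\sigma=\mathrm{id}$ contribution, but the $G$-side sum ranges over all $\sigma$, and for $\sigma\ne\mathrm{id}$ the $\nu$-factors carry exponents $h\bk(\sigma^{-1}(c))$ rather than $h\bk(c)$; the resulting ratio $\prod_c\nu_c^{h(\bk(\sigma^{-1}(c))-\bk(c))}$ can be exponentially large in $h$ and is not controlled by your $|\Aut(\tilde H)|$ factor. The fix is to compare term-by-term: the $G$-side $\sigma$-term is at most $(1-\epsilon)^{h/2}$ times the $F$-side $\sigma$-term (your Jensen argument works for every $\sigma$ since $\bk(\sigma^{-1}(c))\ge 1\ge\nu_c$), and each $F$-side $\sigma$-term is at most the full $s(\partial H^{(h)},1\mapsto v_0;F^\mu)$.
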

\begin{proof}
Define $\phi:\{1\} \rightarrow V(G)$  as $\phi(1)=v_0$.
Set $n:=|V(H^{(h)})|$ and $k:=|V(\tilde{H})|$, and let $\bk$ be such that $H=\tilde{H}^{(\bk)}$. We can assume that $\alpha(\mu_\bk) \geq \epsilon$ since the statement of the  lemma becomes stronger as $\epsilon$ decreases. Let $\pi: V(F) \to V(\tilde H)$ be the canonical strong homomorphism  from $F$ to $\tilde{H}$. Fix a vertex $u_0 \in V(H^{(h)})$, and let $H^{(h)}_{u_0}$ be the corresponding $1$-partially labeled graph with $u_0$ labeled.
We consider a random map $\eta:V(H^{(h)}) \to V(G)$ with $\eta(u_0)=v_0$ chosen according to the distribution $\mu$,  and bound the probability of such a map being a strong homomorphism, depending on which of the alternatives listed in Lemma~\ref{lem:technical2} holds when it is applied to $\psi:=\eta \circ \pi|_{V(H^{(h)})}$ and an appropriate choice of $\gamma$.

The parameter  $\gamma$ used in the subsequent calculations is chosen implicitly so that relevant inequalities hold. It suffices to observe that for any $n$ sufficiently large as a function of $\epsilon$, those inequalities hold for $\gamma$ sufficiently small as a function of $\epsilon$ and sufficiently large as a function of $\delta$.

Suppose first that Lemma~\ref{lem:technical2}~{\bf (a)} holds. That is there exists a set $X$ with $|X| = \lfloor \gamma n \rfloor$ so that $\eta \circ \pi |_{V(H^{(h)}) \setminus X}$ is a strong homomorphism from $H^{(h)}-X$ to both $G$ and $F$.

There are $\binom{n}{\lfloor \gamma n \rfloor}$ choices for  $X$. Furthermore every vertex of $H^{(h)}$ has at least $\alpha(\mu_\bk) n$ twins, and thus at least $\alpha(\mu_\bk) n - \gamma n$ of them are in $V(H^{(h)}) \setminus X$.  Then, as $v_0$ is not $\epsilon$-regular,
conditioned on Lemma~\ref{lem:technical2}~{\bf (a)}, the probability that $\eta$ is a strong homomorphism from $H^{(h)}$ to $G$ is at most
$$ (1-\epsilon)^{\alpha(\mu_\bk) n - \gamma n}\binom{n}{\lfloor \gamma n \rfloor} \max_{\substack{X \subseteq V(H^{(h)}) \\ |X| = \lfloor \gamma n \rfloor}} s(H^{(h)}_{u_0}-X, \phi; F^{\mu}).$$
The maximum in the expression above can be in turn upper bounded by $\left(\frac{1}{\alpha(\nu)}\right)^{\gamma n}s(H^{(h)}_{u_0}, \phi; F^{\mu})$, as the probability that any given vertex of $X$ is mapped to extend the existing strong homomorphism from $V(H^{(h)}) \setminus X$ into $F$ is at least $\alpha(\nu)$. It follows that the probability that $\eta$ is a strong homomorphism in this case is at most

\begin{align}
\nonumber (1-\epsilon)^{\alpha(\mu_\bk) n - \gamma n}&\binom{n}{\lfloor \gamma n \rfloor}\left(\frac{1}{\alpha(\nu)}\right)^{\gamma n} s(H^{(h)}_{u_0}, \phi; F^{\mu}) \\ &\leq \exp(\ln(1-\epsilon)\alpha(\mu_\bk) n -\ln(\alpha(\nu))\gamma n - \gamma n -\ln(\gamma) \gamma n) s(H^{(h)}_{u_0}, \phi; F^{\mu}) \notag \\  &< \frac{\epsilon}{2} s(H^{(h)}_{u_0}, \phi; F^{\mu}). \label{eq:closetohom}
\end{align}

If Lemma~\ref{lem:technical2}~{\bf (b)} holds, then there exists sets $Y_1,Y_2 \subseteq V(H^{(h)})$ with $|Y_1|,|Y_2| \geq \frac{\gamma\alpha(\mu_\bk)}{k}n$ so that  for all $y_1 \in Y_1$, $y_2 \in Y_2$ we have $A_{\tilde H}(\psi(y_1),\psi(y_2)) \neq A_{H^{(h)}}(y_1,y_2)$. Let $J^\mu$ be the weighted graph with $V(J):=V(G)$ and $E(J):=E(G) \triangle E(F)$.  Now we can apply the estimate (\ref{eq:bound1}) from Lemma~\ref{lem:technical1} with  $\ell=\frac{\gamma \alpha(\mu_\bk) n}{2k}$. The probability of $\eta$ being a strong homomorphism in this case is at most
\begin{eqnarray}
\nonumber
3^{n}\delta^{\frac{\gamma\alpha(\mu_\bk) n}{2k}} &=& \exp\left( n \ln 3 + \frac{n \gamma \alpha(\mu_\bk)\ln \delta }{2k} \right) \leq \frac{\epsilon^{n+1}}{2} \le \frac{\epsilon}{2}\alpha(\nu)^n \\& \le & \frac{\epsilon}{2} s(H^{(h)}_{u_0}, \phi; H^{\nu})=\frac{\epsilon}{2} s(H^{(h)}_{u_0}, \phi; F^{\mu}).
\label{eq:farfromhom}
\end{eqnarray}

Summing up both (\ref{eq:closetohom}) and (\ref{eq:farfromhom}) over different choices of $u_0 \in V(H^{(h)})$, we conclude that  the lemma holds.
\end{proof}

Finally, the last lemma required in the proof of Theorem~\ref{thm:main} says that if commensurable $G^\mu$ and $F^\mu \sim H^\nu$ are similar in the local sense, then $s(H^{(h)};G^\mu) \leq s(H^{(h)};H^\nu)$.

\begin{lem}\label{lem:exact}
For every graph $H$ and every $\lambda>0$, there exist $\epsilon,h_0>0$ such that the following
holds. Suppose  $G^\mu$ and $F^\mu \sim H^\nu$ are commensurable and $\alpha(\nu)  \ge \lambda$. If for every $v \in V(G)$,
\begin{equation}
\label{eq:epsilonreg}
\mu\left(\{u: A_G(v,u) \neq A_F(v,u)\}\right) \le \epsilon,
\end{equation}
then $s(H^{(h)};G^\mu) \leq s(H^{(h)};H^\nu)$ for every $h \geq h_0$.
\end{lem}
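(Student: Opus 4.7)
The plan is to compare $s(H^{(h)};G^\mu)=\Pr_\eta[\eta\in\mathrm{SH}_G]$ with $s(H^{(h)};H^\nu)=s(H^{(h)};F^\mu)=\Pr_\eta[\eta\in\mathrm{SH}_F]$, where $\eta:V(H^{(h)})\to V(G)$ is drawn from $\mu$ and $\mathrm{SH}_G,\mathrm{SH}_F$ abbreviate the events that $\eta$ is a strong homomorphism into $G$ and $F$ respectively. Using $F^\mu\sim H^\nu$, I identify $\tilde F=\tilde H$ and let $\pi:V(F)\to V(\tilde H)$ be the canonical projection, $A_v:=\pi^{-1}(v)$ the twin class of $v$ in $F$, $n:=h\|\bk\|_1$, $k:=|V(\tilde H)|$, and $\alpha:=\alpha(\mu_\bk)$ where $H=\tilde H^{(\bk)}$. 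The central structural fact is that $\eta\in\mathrm{SH}_F$ iff $\pi\circ\eta\in\mathrm{SH}_{\tilde H}$, and each such map corresponds to some $\sigma\in\Aut(\tilde H)$ via $\eta(u)\in A_{\sigma(\mathrm{label}(u))}$. Since $\Pr[\mathrm{SH}_G\cap\mathrm{SH}_F]\le\Pr[\mathrm{SH}_F]$, the claim is equivalent to $\Pr[\mathrm{SH}_G\setminus\mathrm{SH}_F]\le\Pr[\mathrm{SH}_F\setminus\mathrm{SH}_G]$.

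To upper-bound $\Pr[\mathrm{SH}_G\setminus\mathrm{SH}_F]$, I apply Lemma~\ref{lem:technical2} to $\pi\circ\eta$ with a small parameter $\gamma>0$ chosen in terms of $\lambda$ and $H$. In Case~(b), $\eta\in\mathrm{SH}_G$ forces $\eta(Y_1)\times\eta(Y_2)\subseteq E(J)$ where $J:=(V(G),E(G)\triangle E(F))$; estimate~(\ref{eq:bound1}) combined with $s(K_2;J^\mu)\le\epsilon$ (implied by the local hypothesis) gives a contribution of the form $s(K_2;J^\mu)\cdot C_b^n$ with $C_b<\lambda$ for $\epsilon$ small. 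In Case~(a), the set $X$ is necessarily nonempty (otherwise $\pi\circ\eta$ would be a strong homomorphism, contradicting $\eta\notin\mathrm{SH}_F$), and for each $u\in X$ twin-freeness of $\tilde H$ furnishes $v_2\in V(\tilde H)$ with $A_{\tilde H}(\pi\eta(u),v_2)\ne A_{\tilde H}(\sigma(\mathrm{label}(u)),v_2)$; the strong-homomorphism-to-$G$ condition then forces the $\ge(\alpha-\gamma)n$ vertices $u'\in V(H^{(h)})-X$ with $\sigma(\mathrm{label}(u'))=v_2$ to satisfy $\eta(u')\in D_{\eta(u)}:=\{v':A_G(\eta(u),v')\ne A_F(\eta(u),v')\}$. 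Integrating over $\eta|_{V(H^{(h)})-X}$ using the independence of its coordinates within twin classes, together with the identity $\sum_v\mu(v)\mu(D_v)=s(K_2;J^\mu)$ and $\mu(D_v)\le\epsilon$, extracts one factor of $s(K_2;J^\mu)$; summing over $X$, $\sigma$, $u$, and $v_2$ yields a total bound of the form $s(K_2;J^\mu)\cdot C_a^n$ for some $C_a<\lambda$ for $\epsilon$ small. Thus both cases contribute a prefactor proportional to the total disagreement mass $s(K_2;J^\mu)$.

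For the lower bound on $\Pr[\mathrm{SH}_F\setminus\mathrm{SH}_G]$, assume $G\ne F$ (the equal case is trivial). By averaging, there exist $x^*,y^*\in V(\tilde H)$ with $(\mu\otimes\mu)(D\cap A_{x^*}\times A_{y^*})\ge s(K_2;J^\mu)/k^2$, where $D\subseteq V(G)^2$ is the disagreement set. Picking $u^*,u'^*\in V(H^{(h)})$ with $\mathrm{label}(u^*)=x^*$ and $\mathrm{label}(u'^*)=y^*$, the event $\{\eta\in A_{\mathrm{id}},(\eta(u^*),\eta(u'^*))\in D\}$ lies in $\mathrm{SH}_F\setminus\mathrm{SH}_G$ and has $\mu$-measure at least $\lambda^{n-2}\cdot s(K_2;J^\mu)/k^2$, giving $\Pr[\mathrm{SH}_F\setminus\mathrm{SH}_G]\ge c''\lambda^n s(K_2;J^\mu)$ with $c''=\lambda^{-2}/k^2$. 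The matching $s(K_2;J^\mu)$ cancels in the comparison, reducing the claim to $\max(C_a,C_b)^n\le c''\lambda^n$, which holds for all $h\ge h_0(H,\lambda)$ once $\epsilon$ is chosen (also in terms of $H,\lambda$) small enough. The main obstacle is the careful extraction of the $s(K_2;J^\mu)$ prefactor in Case~(a) — without this matching factor on both sides of the comparison, the potentially small disagreement mass would preclude a uniform choice of $h_0$ and $\epsilon$ depending only on $H$ and $\lambda$.
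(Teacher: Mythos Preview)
Your proposal is correct and follows essentially the same strategy as the paper: decompose $s(H^{(h)};G^\mu)$ into the contribution from maps that are strong homomorphisms into both $G$ and $F$ (the paper's $p_0$) versus those into $G$ but not $F$ (the paper's $p_1$), show that both the gain $p_1$ and the loss $s(H^{(h)};H^\nu)-p_0$ carry a matching factor of $\delta:=s(K_2;J^\mu)$, and then compare the remaining exponential rates. Your lower bound on $\Pr[\mathrm{SH}_F\setminus\mathrm{SH}_G]$ via a single bad pair is exactly the paper's argument that $p_0\le(1-\delta)s(H^{(h)};H^\nu)$, rephrased.

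The only noteworthy difference is in how the two alternatives of Lemma~\ref{lem:technical2} are handled for the upper bound on $p_1$. You treat them separately: in Case~(b) you invoke the $K_{\ell,\ell}$ estimate~(\ref{eq:bound1}) and split $\delta^\ell=\delta\cdot\delta^{\ell-1}\le\delta\epsilon^{\ell-1}$; in Case~(a) you locate a single ``wrong'' vertex $u\in X$, find a class $v_2$ witnessing $\pi\eta(u)\neq\sigma(\mathrm{label}(u))$, and integrate $\sum_w\mu(w)\mu(D_w)^m\le\epsilon^{m-1}\delta$ over $\eta(u)$ --- which is precisely the calculation behind~(\ref{eq:bound2}). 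The paper instead observes that \emph{both} cases force a copy of $K_{1,\ell}$ in $J$ (with the same $\ell=\lfloor\alpha(\mu_\bk)h/(2|V(\tilde H)|)\rfloor$, using the specific choice $\gamma=1/(2|V(H)|)$) and applies~(\ref{eq:bound2}) once. This unification is tidier and avoids the union bound over $X$, $\sigma$, $u$, $v_2$ that your Case~(a) requires, but the two arguments are equivalent in substance.

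One small imprecision: you write ``for each $u\in X$'' when you mean ``for some $u\in X$ with $\pi\eta(u)\neq\sigma(\mathrm{label}(u))$''; not every element of $X$ need be misplaced, but at least one is since $\eta\notin\mathrm{SH}_F$.
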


\begin{proof}
Suppose that $H=\tilde{H}^{(\bk)}$, and let $\pi_F:V(F) \to V(\tilde{H})$ and $\pi_{H^{(h)}}:V(H^{(h)}) \to V(\tilde{H})$ be the canonical  strong homomorphisms from $F$ and $H^{(h)}$ to $\tilde{H}$, respectively. Let $J^\mu$ be the weighted graph commensurable with $G^\mu$ and $F^\mu$ with $E(J)=E(G) \triangle E(F)$. By the assumption (\ref{eq:epsilonreg}), we have $s(e,\phi;J^\mu) \leq \epsilon$ for every $\phi: \{1\} \to V(J)$ where $e$ denotes an edge with one of its ends labeled. Denote $\delta := s(K_2;J^\mu)$, and note that there exist $v_1,v_2 \in V(\tilde{H})$ such that
\begin{equation}
\label{eq:deltaBetweenBags}
\Pr[A_J(\mathbf{u}_1,\mathbf{u}_2)=1 \ | \ \mathbf{u}_1 \in \pi^{-1}_F(v_1) \wedge \mathbf{u}_2 \in \pi^{-1}_F(v_2)] \ge \delta,
\end{equation}
where $\mathbf{u}_1$ and $\mathbf{u}_2$ are independent random variables taking values in $V(F)$ according to  $\mu$.

Note that for every strong homomorphism $\theta$ from $H^{(h)}$ to $F$, there exists $\sigma \in \Aut(\tilde{H})$ such that $\pi_{F} \circ \theta = \pi_{H^{(h)}}$. For every $\sigma \in \Aut(\tilde{H})$, let $\mathbf{\theta}_\sigma:V(H^{(h)}) \rightarrow V(F)$ be a random map chosen according to the measure $\mu$ and conditioned on $\pi_{F} \circ \mathbf{\theta}_\sigma = \pi_{H^{(h)}}$. The map $\mathbf{\theta}_\sigma$ can be sampled by mapping every vertex $v \in V(H^{(h)})$ independently and randomly according to $\mu$ to a vertex in $\pi_F^{-1} \circ \pi_{H^{(h)}}(v)$. Note that $\mathbb{\theta}_\sigma$ is a strong homomorphism from $H^{(h)}$ to $F$. If $h>1$, there exists distinct vertices $w_1,w_2 \in V(H^{(h)})$ with $\pi_{H^{(h)}}(w_1)=v_1$ and $\pi_{H^{(h)}}(w_2)=v_2$. Then $\theta_\sigma$ maps $w_1$ and $w_2$ randomly and independently to vertices in $\pi_F^{-1}(v_1)$ and $\pi_F^{-1}(v_2)$, respectively. If $\theta_\sigma$ is a strong homomorphism from $H^{(h)}$ to $G$, then $A_J(\theta_\sigma(w_1),\theta_\sigma(w_2))=0$. Hence by (\ref{eq:deltaBetweenBags}), for $h>1$, the probability that $\mathbb{\theta}_\sigma$ is a strong homomorphism  from $H^{(h)}$ to $G$ is at most $1-\delta$.

Let $p_0$ be the probability that a random map $\psi:V(H^{(h)}) \to
V(G)$  chosen according to the distribution $\mu$ is a strong homomorphism  into both $G$ and $F$. From the above discussion, we conclude that
\begin{equation}\label{eq:p0}
p_0 \leq (1-\delta)s(H^{(h)};F^\mu)=(1-\delta)s(H^{(h)};H^\nu).
\end{equation}
Let $p_1=s(H^{(h)};G^\mu)-p_0$ be the probability that $\psi$ is a strong homomorphism from $H^{(h)}$ into $G$, but not into $F$.  We apply Lemma~\ref{lem:technical2}  to $\pi_F \circ \psi:V(H^{(h)}) \to V(\tilde{H})$ with $\gamma=\frac{1}{2|V(H)|}$. If {\bf (a)} holds, then there exists a set $X \subseteq V(H^{(h)})$ of size at most $h/2$ such that $\pi_F \circ \psi|_{V(H^{(h)}) \setminus X}$ is a strong homomorphism. In particular for every $v \in V(\tilde{H})$, we have
$$\left|\left\{u \in V(H^{(h)}): \mbox{$u$ is a twin of $v$ in $H^{(h)}$,  and $\pi\psi(u)=v$}\right\}\right| \ge h/2.$$
Then since by our assumption $\psi$ is \emph{not} a strong homomorphism into $F$, there exists a vertex $v_0 \in V(H^{(h)})$ such that $$|\{u \in V(H^{(h)}): A_{H^{(h)}}(v_0,u) \neq  A_{F}(\psi(v_0),\psi(u))\}| \ge h/2,$$
where we used the fact that $A_{\tilde{H}}(\pi_F \circ \psi(v_0),\pi_F \circ \psi(u))=A_{F}(\psi(v_0),\psi(u))$.

If Lemma~\ref{lem:technical2}~{\bf (b)} holds, then there are $Y_1,Y_2 \subseteq V(H^{(h)})$ of size at least $\frac{\gamma\alpha(\mu_\bk) h|V(H)|}{|V(\tilde{H})|}=\frac{\alpha(\mu_\bk) h}{2|V(\tilde{H})|}$ so that for all $y_1 \in Y_1$, $y_2 \in Y_2$ we have $ A_{H^{(h)}}(y_1,y_2) \neq A_{\tilde H}(\pi_F \circ \psi(y_1),\pi_F \circ \psi(y_2))=A_F(\psi(y_1),\psi(y_2))$.  In particular there exists a vertex $v_0 \in V(H^{(h)})$ such that
\begin{equation}
\label{eq:largeStar}
\left|\left\{u \in V(H^{(h)}): A_{H^{(h)}}(v_0,u) \neq  A_{F}(\psi(v_0),\psi(u))\right\}\right| \ge \frac{\alpha(\mu_\bk) h}{2|V(\tilde{H})|}.
\end{equation}

So Lemma~\ref{lem:technical2}~{\bf (a)} and~{\bf (b)} both imply the existence of $v_0 \in V(H^{(h)})$ satisfying (\ref{eq:largeStar}).
Now since $\psi$ is a strong homomorphism from $H^{(h)}$ to $G$, it follows that $\psi(V(H^{(h)}))$ contains a subgraph isomorphic to
$K_{1,\ell}$ in $J$ where $\ell=\left\lfloor \frac{\alpha(\mu_\bk) h}{2|V(\tilde{H})|}\right\rfloor$.

By Lemma~\ref{lem:technical1} we now have
\begin{equation}\label{eq:p1}
p_1 \leq  3^{h |V(H)|} \delta \epsilon^{\left\lfloor \frac{\alpha(\mu_\bk) h}{2|V(\tilde{H})|}\right\rfloor}.
\end{equation}
Now (\ref{eq:p0}) and (\ref{eq:p1}) imply that
\begin{eqnarray*}
s(H^{(h)};G^\mu) &\le& (1-\delta)s(H^{(h)};H^\nu) +  3^{h |V(H)|} \delta \epsilon^{\left\lfloor \frac{\alpha(\mu_\bk) h}{2|V(\tilde{H})|}\right\rfloor}
\\ &\le& s(H^{(h)};H^\nu) -\delta \left(\alpha(\nu)^{h|V(H)|} - 3^{h |V(H)|} \epsilon^{\left\lfloor \frac{\alpha(\mu_\bk) h}{2|V(\tilde{H})|}\right\rfloor}\right)
\\&\le& s(H^{(h)};H^\nu) -\delta \left(\lambda^{h|V(H)|} - 3^{h |V(H)|} \epsilon^{\left\lfloor \frac{\alpha(\mu_\bk) h}{2|V(\tilde{H})|}\right\rfloor}\right)
\\&\le& s(H^{(h)};H^\nu)
\end{eqnarray*}
as long as $\epsilon$ is sufficiently small and $h$ is sufficiently large.
\end{proof}%

With Lemmas~\ref{lem:close},~\ref{lem:vertices_regular}, and~\ref{lem:exact} in hand, we can now complete the proof of Theorem~\ref{thm:main}.

\begin{proof}[Proof of Theorem~\ref{thm:main}]
Let $H=\tilde H^{(\bk)}$. Set $\lambda:=\lambda_H$ where $\lambda_H$ is a constant that depends only on $H$ and is specified below. Let $\epsilon < \lambda$ be chosen so that Lemma~\ref{lem:exact} holds for this value of $\lambda$ and some $h_0$. Let $\delta$ be chosen sufficiently small so that
Lemma~\ref{lem:vertices_regular} holds for this value of $\epsilon$. Finally, choose $h_0$ so that
Lemmas~\ref{lem:close},~\ref{lem:vertices_regular}~and~\ref{lem:exact} all hold for the chosen values of $\epsilon$ and $\delta$.

Let $h \ge h_0$ and suppose that there exists a weighted graph $G^\mu$ such that $s(H^{(h)};G^\mu)> \max_{\mu'}s(H^{(h)};H^{\mu'})$.
We can assume  that
\begin{equation}
\label{eq:Gmaximize}
\sup_{F^{\mu'} : \tilde{F} = \tilde{G}}s(H^{(h)};F^{\mu'}) = s(H^{(h)};G^{\mu}).
\end{equation}

Let $\nu$ be such that $\tilde{H}^{\nu}$ minimizes $d_1(G^\mu,\tilde{H}^\nu)$. By~(\ref{eq:Gmaximize}), without loss of generality, we can assume that
$G^\mu$ is commensurable with $F^\mu \sim \tilde{H}^\nu$ satisfying
$$d_1(G^\mu,\tilde{H}^\nu) = \sum_{u,v \in V(G)}\mu(u)\mu(v) |A_{G}(u,v)-A_{F}(u,v)|. $$

By Lemma~\ref{lem:close} we have  $\delta \ge d_1(G^\mu,H) \ge d_1(G^\mu,\tilde{H}^\nu)$ which shows that $d_1(H,\tilde{H}^\nu) \le 2\delta$. Hence comparing $s(\tilde{H}; H)$ and $s(\tilde{H}, \tilde{H}^\nu)$ we conclude from (\ref{eq:continuity}) that there exists a constant $\lambda_H>0$ depending only on $H$ such that $\alpha(\nu) > \lambda_{H}$ provided that $\delta$ is sufficiently small.

It follows from  (\ref{eq:Gmaximize}) and (\ref{eq:partial}) that $s(\partial H^{(h)}, \phi; G^\mu)$ is independent of the choice of
$\phi:\{1\} \to V(G)$. Then Lemma~\ref{lem:vertices_regular} shows that  every vertex $v \in V(G)$ is
$\epsilon$-regular  with respect to $(G,F,\nu)$ as otherwise one would have
\begin{eqnarray*}
s(H^{(h)};G^\mu) &=& \frac{1}{h|V(H)| }\Ex_{\phi} \left[ s(\partial H^{(h)},\phi;G^\mu) \right] \le \frac{ \epsilon}{h|V(H)| } \Ex_{\phi} \left[ s(\partial H^{(h)},\phi;F^\mu) \right] \\ &=& \epsilon s(H^{(h)};F^\mu) = \epsilon s(H^{(h)};\tilde{H}^\nu),
\end{eqnarray*}
where both expectations are over a random $\phi:\{1\} \to V(G)$ chosen according to $\mu$.
In order to  apply Lemma~\ref{lem:exact}, we need to show there is no vertex $v \in V(G)$ with
\begin{equation}
\label{eq:irregular}
\mu \left(\{u: A_G(u,v) \neq A_F(u,v)\} \right) > \epsilon.
\end{equation}
Suppose to the contrary that (\ref{eq:irregular}) holds for a vertex $v \in V(G)$. Since $v$ is $\epsilon$-regular, there exists
$v_0 \in V(G)$ such that $$\mu\left(\{ u \in V(G) : A_G(u,v) \neq A_F(u,v_0)\}\right) \leq \epsilon.$$
Let $F_1$ be the graph obtained from $F$ by replacing $v$ with a copy of $v_0$. By (\ref{eq:irregular}) we have $d_1(G^\mu,F_1^\mu) < d_1(G^\mu,F^\mu)$. This contradicts the assumption that $\tilde{H}^{\nu}$ minimizes $d_1(G^\mu,\tilde{H}^\nu)$ since $F_1^\mu \sim \tilde{H}^{\nu'}$ for some probability measure $\nu'$. Hence we can apply Lemma~\ref{lem:exact}  to conclude that  $s(H^{(h)};G^\mu) \le s(H^{(h)};F^\mu)= s(H^{(h)};\tilde{H}^\nu)$.
\end{proof}

\section{Proof of Theorem~\ref{thm:balanced}}
Suppose that $H=\tilde{H}^{(\bk)}$. Since $s(H^{(h)};H)=s(H^{(h)};\tilde{H}^{\mu_\bk})=s(\tilde{H}^{(h \bk)};\tilde{H}^{\mu_\bk})$, Theorem~\ref{thm:balanced} follows from Theorem~\ref{thm:main} and Lemma~\ref{lem:balanced} below.

\begin{lem}
\label{lem:balanced}
Let $H$ be a twin-free graph and $\bk \in \mathbb{N}^{V(G)}$ be a vector. Then there exists $h_0>0$ such that $s(H^{(h \bk)};H^{\mu})$ achieves a  maximum
on $\mathcal{M}(V(H))$ at $\mu:=\mu_{\bk}$ for all integers $h \geq h_0$ if and only if $\bk$ is $\Aut(H)$-invariant.
\end{lem}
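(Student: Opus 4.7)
The plan is to first obtain a closed-form expression for $s(H^{(h\bk)};H^\mu)$, then analyze when $\mu_\bk$ maximizes it. Since $H$ is twin-free, I would begin by showing that every strong homomorphism $\phi:H^{(h\bk)}\to H$ has the form $\phi=\sigma\circ\pi$ for a unique $\sigma\in\Aut(H)$, where $\pi$ is the canonical projection. The argument is in the spirit of Lemma~\ref{lem:technical2}: for each $v\in V(H)$, a pigeonhole choice of $\sigma(v)\in\phi(\pi^{-1}(v))$ together with completeness or emptiness of edges between parts of the blow-up forces $\sigma$ to be a strong endomorphism of $H$, hence an automorphism by twin-freeness. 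A second use of twin-freeness then forces $\phi(u)=\sigma(\pi(u))$ for every $u$, since otherwise $\phi(u)$ and $\sigma(\pi(u))$ would be distinct twins in $H$. Consequently
$$F_h(\mu) \;:=\; s(H^{(h\bk)};H^\mu) \;=\; \sum_{\sigma\in\Aut(H)} p_{h\bk^\sigma}(\mu), \qquad \bk^\sigma(v):=\bk(\sigma^{-1}(v)).$$

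If $\bk$ is $\Aut(H)$-invariant then every $\bk^\sigma$ equals $\bk$, so $F_h=|\Aut(H)|\cdot p_{h\bk}$ is maximized at $\mu_\bk$ by~(\ref{eq:maximizing_product}), for every $h$. For the converse, assume $\bk$ is not $\Aut(H)$-invariant; I aim to show $\mu_\bk$ is not a critical point of $F_h$ on $\mathcal{M}(V(H))$ for all sufficiently large $h$, hence not a maximum. The relevant identity, obtained from the fact that $\bk^\sigma$ and $\bk$ are permutations of each other and hence have the same entropy, is
$$p_{h\bk^\sigma}(\mu_\bk) \;=\; p_{h\bk}(\mu_\bk)\cdot e^{-h\|\bk\|_1\, d_\sigma},\qquad d_\sigma:=\mathrm{KL}(\mu_{\bk^\sigma}\,\|\,\mu_\bk),$$
with $d_\sigma=0$ precisely when $\sigma\in\mathrm{Stab}(\bk)$. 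Differentiating $F_h$ and imposing the Lagrange condition on the simplex, $\mu_\bk$ is critical if and only if $\sum_{\sigma}(\bk^\sigma(v)/\bk(v))\,e^{-h\|\bk\|_1 d_\sigma}$ is constant in $v$.

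If this condition held for every integer $h\ge h_0$, linear independence of the distinct exponentials $e^{-h\|\bk\|_1 d}$ on $\mathbb{Z}_{\ge h_0}$ would force, for each positive value $d$ taken by some $d_\sigma$, the identity
$$\sum_{\sigma:\, d_\sigma=d}\bk^\sigma \;=\; \bigl|\{\sigma:d_\sigma=d\}\bigr|\cdot\bk.$$
Adding the trivial $d=0$ stratum and summing all strata yields $\sum_{\sigma\in\Aut(H)}\bk^\sigma=|\Aut(H)|\cdot\bk$; applying any $\tau\in\Aut(H)$ to both sides then gives $\bk^\tau=\bk$, contradicting non-invariance. The only step that takes real care is upgrading ``not critical for some $h$'' to ``not critical for all $h\ge h_0$''. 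I would handle this with a standard dominant-term estimate: if $d^*$ is the smallest positive $d_\sigma$ at which the stratum identity fails (at some coordinate $v^*$), then once $h$ is large the $e^{-h\|\bk\|_1 d^*}$ contribution strictly dominates all contributions from strictly larger $d_\sigma$, which gives a uniform nonvanishing of $\partial_{v^*}F_h(\mu_\bk)-\partial_{v'}F_h(\mu_\bk)$ for a suitable $v'$, completing the proof.
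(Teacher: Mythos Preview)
Your proof is correct and follows essentially the same route as the paper: express $s(H^{(h\bk)};H^{\mu})$ as a sum over $\Aut(H)$ of monomials $p_{h\bk^{\sigma}}(\mu)$, handle the invariant case by~(\ref{eq:maximizing_product}), and for the converse compute the partial derivatives at $\mu_\bk$, group the automorphisms by the value of $p_{\bk^{\sigma}}(\mu_\bk)$ (your $d_\sigma$, the paper's $r$), and invoke linear independence of the exponentials in $h$ to deduce that each stratum sum $\sum_{\sigma}\bk^{\sigma}(v)/\bk(v)$ is constant in $v$. The only substantive difference is the final combinatorial step: the paper extracts a contradiction from a single stratum by a max/min argument on the entries of $\bk$, while you sum all strata and use the group action. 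Both are short and valid.

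One remark: your closing ``dominant-term'' paragraph is unnecessary. The lemma asks for the existence of an $h_0$ such that $\mu_\bk$ is a maximizer for \emph{all} $h\ge h_0$; its contrapositive only requires that for every $h_0$ there exists \emph{some} $h\ge h_0$ at which $\mu_\bk$ is not critical. Your linear-independence argument already delivers this directly (if criticality held for all $h\ge h_0$ for some $h_0$, you derived invariance), so you need not upgrade to ``not critical for all large $h$''.
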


\begin{proof}
Note that for every probability distribution $\mu$ on $V(H)$, we have
$$s(H^{\bk};H^{\mu}) =  \sum_{\sigma \in \Aut(H)} p_{\sigma(\bk)}^h(\mu),$$
where $p_{\sigma(\bk)}(\mu)$ is defined as in (\ref{eq:s_ba}).

If $\bk$ is $\Aut(H)$-invariant, then by (\ref{eq:maximizing_product}), for every $\mu \in \mathcal{M}(V(H))$, we have $$s(H^{\bk};H^{\mu})=|\Aut(H)|p_{\bk}^h(\mu) \leq |\Aut(H)|p_{\bk}^h(\mu_\bk)=s(H^{\bk};H^{\mu_\bk}).$$
Hence in such a case $s(H^{\bk};H^{\mu})$ achieves its maximum
on $\mathcal{M}(V(H))$ at $\mu_{\bk}$ for all positive $h$.

Suppose now that $\mu:=\mu_{\bk}$ is a local maximum of $s(H^{\bk};H^{\mu})$ on $\mathcal{M}(V(H))$  for all integers $h \geq h_0$. For every non-negative real $r$, let $$A_r := \{\sigma \in \Aut(H) \: | \: p_{\sigma(\bk)}(\mu_{\bk}) = r\}.$$
We have
$$s(H^{\bk};H^{\mu})=\sum_{r \in \mathbb{R}_+} \sum_{\sigma \in A_r} p_{\sigma(\bk)}^h(\mu).$$
For every $\sigma \in A_r$, we have
$$\left. \frac{\partial p_{\sigma(\bk)}^h}{\partial \mu(v)}\right|_{\mu_\bk}:=h\bk(\sigma(v)) \frac{\prod_{u \in V(H)}\mu_\bk(u)^{h\bk(\sigma(u))}}{\mu_\bk(v)} = h\bk(\sigma(v)) \frac{r^h}{\mu_\bk(v)}=hr^h \|\bk\|_1 \frac{\bk(\sigma(v))}{\bk(v)}.$$
Since $\mu_{\bk}$ is a local maximum of $s(H^{\bk};H^{\mu})$, we know that
$$\left. \frac{\partial s(H^{\bk};H^{\mu})}{\partial \mu(v)}\right|_{\mu_\bk} = \sum_{r \in \mathbb{R}_+} \sum_{\sigma \in A_r} \left. \frac{\partial p_{\sigma(\bk)}^h}{\partial \mu(v)}\right|_{\mu_\bk}
=\sum_{r \in \mathbb{R}_+}hr^h \|\bk\|_1 \left(\frac{\sum_{\sigma \in A_r} \bk(\sigma(v))}{\bk(v)} \right) $$
is independent of the choice of $v \in V(H)$ for all integers $h \geq h_0$. It follows that for every $r>0$, $a_{r,v}:=\frac{\sum_{\sigma \in A_r} \bk(\sigma(v))}{\bk(v)}$ is  independent of the choice of $v \in V(H)$. If $\bk$ is not $\Aut(H)$-invariant, then we can choose $r \in \mathbb{R}_+$ and $u \in V(H)$ so that $\bk(u) > \bk(\sigma(u))$ for some $\sigma \in A_r$, and subject to this condition $\bk(u)$ is maximum. Then $\bk(u) \geq \bk(\sigma(u))$ for every $\sigma \in \Aut(H)$. Therefore, $a_{r,u} < |A_r|$. On the other hand, if $u'$ is chosen with $\bk(u')$ minimum, then $a_{r,u'} \geq |A_r|$, a contradiction. It follows that $\bk$ is  $\Aut(H)$-invariant, as desired.
\end{proof}

\section{Concluding remarks}

In this paper we have shown that the strong homomorphism density of
sufficiently large balanced blow-ups of a fixed graph $H$ is maximized by $H^{\nu}$ for an appropriate probability measure $\nu$. It
seems likely that similar techniques might allow one to refine and extend this result to other settings, described below.  Our choice to focus on weighted graphs and on strong homomorphism densities, rather than induced subgraph counts, was motivated by the fact that presenting the
argument in this setting avoids multiple unnecessary technicalities and more importantly, by our belief that this setting is natural for the
problem. It is for similar reasons that in recent years the rapidly developing theory of graph homomorphisms has been replacing the classical
approach of considering subgraph counts.

\subsection{Counting induced copies} As mentioned in the introduction, Bollob\'as \emph{et al.}~\cite{MR1326923} asked the following question.

\begin{question}\label{q:Bollobas}
For which graphs $H$, does there exist a constant $h$ such that for
sufficiently large $n$, the graph on $n$ vertices that contains the
maximum number of induced copies of $H^{(h)}$ is a blow-up of $H$?
\end{question}

We convinced ourselves that such a constant exists for all graphs $H$.
Theorem~\ref{thm:main} gives an asymptotic answer to this question.
There exist techniques which in some other extremal problems allow one
to deduce exact results from the asymptotic ones, employed for example
in~\cite{CycleErdos}. We were unable to do so for Question~\ref{q:Bollobas}. One
can however adjust the statements and proofs of
Lemmas~\ref{lem:vertices_regular} and~\ref{lem:exact} to provide the
answer to Question~\ref{q:Bollobas}. The arguments involving
derivatives in the proof of Theorem~\ref{thm:main} can be modified to
use induction instead. Additional adjustments are necessary to control
the error terms which appear when one considers embeddings instead of
strong homomorphisms.

\subsection{A hypergraph generalization.}  
A \emph{finite palette} is a sequence $K = (K_j)_{j=0}^{\infty}$ of finite sets, so that $0 \in K_j$ for every $j$ and
 $K_j=\{0\}$ for all but finitely many values of $j$.
Let $V$ be a finite set. For a positive integer $j$, let $V^{[j]}$ denote
the set of all maps $\phi:[j] \to V$ and let $\Inj([j],V) \subseteq V^{[j]}$ denote the set of all the injective maps.  Slightly modifying the definition from~\cite{MR2666763}, we define a \emph{$K$-colored  hypergraph} on the vertex set $V$ to be a
tuple $G = (G_j)_{j=0}^{\infty}$, where each $G_j :  V^{[j]} \to K_j$ is a function.

Undirected graphs fit this framework as follows.  For any $k \geq 0$, let the \emph{monochromatic palette $\{0, 1\}_k$ of order $k$}  be the palette whose $k$-th component is $\{0, 1\}$ and all other components are trivial. We say that the hypergraph $G$ is \emph{undirected} if $G_j(\phi \circ \sigma)=G_j(\phi)$ for every $j \geq 0$, every $\sigma \in \Inj([j],[j])$ and every $\phi \in V^{[j]}$. Finally, we say that $G$ is \emph{loopless} if $G_j(\phi) \neq 0$ implies $\phi \in \Inj([j],V)$ for every  $j \geq 0$ and every $\phi \in V^{[j]}$. Then every graph can be viewed as a $\{0,1\}_2$-colored undirected loopless hypergraph.

Let $K$ be a finite palette, and let $G$ and $H$ be two $K$-colored hypergraphs with vertex sets $V_G$ and $V_H$ respectively.  We say that a map $\psi: V_G \to V_H$ is a \emph{strong homomorphism} if $H_j(\psi \circ \phi)=G_j(\phi)$ for every $j \geq 0$ and $\phi:[j] \to V_G$. Given a positive integer $h$, we say that $G$ is an \emph{$h$-blowup} of $H$ if there exists a strong homomorphism $\psi: V_G \to V_H$ such that $|\psi^{-1}(v)|=h$ for every $v \in V(H)$. It is not hard to see that if $H$ is loopless then its $h$-blowup is unique up to an automorphism. We denote it by $H^{(h)}$. Definitions of \emph{weighted} colored hypergraphs and of the \emph{strong homomosphism density} $s(G;H^{\nu})$, where $\nu$ is a probability measure, are completely analogous to the definitions given in Section~\ref{sec:weighted}.

We believe that the following direct analogue of Theorem~\ref{thm:main} holds in this setting and can be obtained by modifying the proof presented in this paper. Let $K$ be a finite palette and let $H$ be a loopless $K$-colored hypergraph with the vertex set $V$. Then there exists a positive integer $h_0$ such that for every $h \ge h_0$,  there exists a distribution $\nu \in \mathcal{M}(V)$ with
$$s(H^{(h)};H^{\nu}) = \sup_{G} s(H^{(h)};G),$$
where the supremum is over all  loopless $K$-colored hypergraphs $G$.

Perhaps, an even more general version of Theorem~\ref{thm:main} can
be stated in category theoretic language.

\section*{Acknowledgements}
The first author wishes to thank Igor Shinkar for drawing his attention to the reference~\cite{MR1326923}.

\bibliographystyle{alpha}
\bibliography{blowup}

\newcommand{\etalchar}[1]{$^{#1}$}
\begin{thebibliography}{HHK{\etalchar{+}}11}

\bibitem[AT10]{MR2666763}
Tim Austin and Terence Tao.
\newblock Testability and repair of hereditary hypergraph properties.
\newblock {\em Random Structures Algorithms}, 36(4):373--463, 2010.

\bibitem[BEHJ95]{MR1326923}
B{\'e}la Bollob{\'a}s, Yoshimi Egawa, Andrew Harris, and Guo~Ping Jin.
\newblock The maximal number of induced {$r$}-partite subgraphs.
\newblock {\em Graphs Combin.}, 11(1):1--19, 1995.

\bibitem[BNT86]{MR866942}
B{\'e}la Bollob{\'a}s, Chi{\^e} Nara, and Shun-ichi Tachibana.
\newblock The maximal number of induced complete bipartite graphs.
\newblock {\em Discrete Math.}, 62(3):271--275, 1986.

\bibitem[BS94]{MR1292981}
Jason~I. Brown and Alexander Sidorenko.
\newblock The inducibility of complete bipartite graphs.
\newblock {\em J. Graph Theory}, 18(6):629--645, 1994.

\bibitem[ES08]{ElekSzegedy}
G\'abor Elek and Bal\'azs Szegedy.
\newblock A measure-theoretic approach to the theory of dense hypergraphs.
\newblock 2008.
\newblock arXiv:0810.4062v2.

\bibitem[Exo86]{MR867727}
Geoffrey Exoo.
\newblock Dense packings of induced subgraphs.
\newblock {\em Ars Combin.}, 22:5--10, 1986.

\bibitem[FLS07]{MR2257396}
Michael Freedman, L{\'a}szl{\'o} Lov{\'a}sz, and Alexander Schrijver.
\newblock Reflection positivity, rank connectivity, and homomorphism of graphs.
\newblock {\em J. Amer. Math. Soc.}, 20(1):37--51 (electronic), 2007.

\bibitem[HHK{\etalchar{+}}11]{CycleErdos}
Hamed Hatami, Jan Hladk\'y, Daniel Kr\'al, Serguei Norine, and Alexander
  Razborov.
\newblock On the number of pentagons in triangle-free graphs.
\newblock 2011.
\newblock arXiv:1102.1634v2.

\bibitem[Hir11]{James}
James Hirst.
\newblock The inducibility of graphs on four vertices.
\newblock 2011.
\newblock manuscript.

\bibitem[LS06]{MR2274085}
L{\'a}szl{\'o} Lov{\'a}sz and Bal{\'a}zs Szegedy.
\newblock Limits of dense graph sequences.
\newblock {\em J. Combin. Theory Ser. B}, 96(6):933--957, 2006.

\bibitem[PG75]{MR0401552}
Nicholas Pippenger and Martin~Charles Golumbic.
\newblock The inducibility of graphs.
\newblock {\em J. Combinatorial Theory Ser. B}, 19(3):189--203, 1975.

\bibitem[Raz07]{MR2371204}
Alexander~A. Razborov.
\newblock Flag algebras.
\newblock {\em J. Symbolic Logic}, 72(4):1239--1282, 2007.

\end{thebibliography}

\end{document}